\newcommand{\Mod}[1]{\ (\mathrm{mod}\ #1)}
\newtheorem{theorem}{Theorem}[section]
\newtheorem{lemma}[theorem]{Lemma}
\newtheorem{corollary}[theorem]{Corollary}
\newtheorem{proposition}[theorem]{Proposition}
\newcommand{\subjclass}[2][2022]{%
  \let\@oldtitle\@title%
  \gdef\@title{\@oldtitle\footnotetext{#1 \emph{Mathematics subject classification.} #2}}%
}
\newcommand{\keywords}[1]{%
  \let\@@oldtitle\@title%
  \gdef\@title{\@@oldtitle\footnotetext{\emph{Key words and phrases.} #1.}}%
}
\title{Uniform distribution of polynomially-defined additive function to varying moduli}
\subjclass{Primary 11A25; Secondary: 11N64, 11N36.}
\keywords{uniform distribution, additive function, exponential sums}
\author{Agbolade Akande}
\date{\today}
\begin{document}

\maketitle
\section{Introduction}
Let $q$ be a positive integer and $f$ be an integer-valued arithmetic function. We say $f$ is uniformly distributed modulo $q$ if for every residue class $a\Mod{q}$
\begin{eqnarray*}
\#\{n \le x:f(n) \equiv a\Mod{q}\} \sim \frac{x}{q} & \text{as $x \rightarrow \infty$}.
\end{eqnarray*}
One way to prove that $f$ is uniformly distributed is by considering the mean value of $f$ composed with additive characters modulo $q$ \cite[Proposition 1.1]{Narkiewicz}:
\begin{center}
    $f$ is uniformly distributed modulo $q$ if and only if for all $r = 1,2,\hdots,q-1$, one has $\lim \limits_{x \rightarrow  \infty}\frac{1}{x}\sum \limits_{n \le x} \exp\left(\frac{2\pi i rf(n)}{q} \right) = 0$.
\end{center}
Pillai in 1940\cite{Pillai} used this approach to show $\Omega(n)$ is uniformly distributed modulo $q$ for every positive integer $q$. $\Omega(n)$ is an example of an \textbf{additive} function: an integer-valued arithmetic function $f$ with the property $f(mn) = f(m)+f(n)$ for coprime $m$ and $n$. In 1969, Delange gave a general criterion \cite{Delange} for when additive functions $f$ are uniformly distributed modulo $q$ for fixed $q$. Delange's criterion is as follows:
\begin{theorem}\label{Delange}
Define two conditions for every integer $t > 1$:
\begin{center}
    $(A_t)$ The series $\sum \limits_{\underset{t \nmid f(p)}{p}} p^{-1}$ diverges,
\end{center}
and
\begin{center}
    $(B_t)$ For all $k \ge 1$, the number $\frac{2f(2^k)}{t}$ is an odd integer.
\end{center}
An additive function $f$ is uniformly distributed modulo $q$ in the following 3 cases:
\begin{itemize}
    \item $q$ is odd and $A_p$ holds for every prime divisor $p$ of $q$.
    \item $2 \mid \mid q$, $A_p$ holds for every odd prime divisor $p$ of $q$ and either $A_2$ or $B_2$ hold.
    \item $4\mid q$, $A_p$ holds for every odd prime divisor $p$ of $q$, and either $A_2$ holds or both $B_2$ and $A_4$ hold.
\end{itemize}
Moreover, these conditions are necessary for $f$ to be uniformly distributed modulo $q$.
\end{theorem}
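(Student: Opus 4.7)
The plan is to apply the mean-value criterion stated just above: it suffices to prove that for every $r\in\{1,\dots,q-1\}$,
$$ M_r(x) \;:=\; \frac{1}{x}\sum_{n\le x} e\!\left(\tfrac{rf(n)}{q}\right)\;\longrightarrow\; 0\qquad (x\to\infty), $$
where $e(\alpha):=e^{2\pi i\alpha}$. The additivity of $f$ makes $g_r(n):=e(rf(n)/q)$ multiplicative with $|g_r|\le 1$, so the task is a mean-value problem for a unimodular multiplicative function, which I would attack via the Hal\'asz--Wirsing--Delange machinery. The governing principle is: for such $g$, either $\sum_p\frac{1-\mathrm{Re}\,g(p)}{p}=\infty$ (together with an exclusion of oscillatory obstructions of the form $g(n)\sim n^{it}$, which for our $g_r$ is automatic because $g_r(p)$ lies in a fixed finite set of $q$-th roots of unity and $\sum_p\frac{1-\cos(t\log p)}{p}=\infty$ for $t\ne 0$ by PNT plus partial summation), in which case $M(g)=0$ by Hal\'asz; or the series converges, in which case $M(g)$ equals the Delange Euler product $\prod_p(1-1/p)\sum_{k\ge 0}g(p^k)/p^k$, nonvanishing iff every local factor is nonzero.

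I would then study the sum $S_r := \sum_p (1-\mathrm{Re}\,g_r(p))/p$. Setting $t_r := q/\gcd(q,r) > 1$, the $p$-th term vanishes precisely when $t_r\mid f(p)$, so $S_r=\infty$ iff condition $A_{t_r}$ holds. The elementary implication $A_s\Rightarrow A_u$ whenever $s\mid u$ (because $\{p:s\nmid f(p)\}\subseteq\{p:u\nmid f(p)\}$) shows that the hypothesis ``$A_p$ for every odd prime $p\mid q$'' already yields $A_{t_r}$ whenever $t_r$ has an odd prime factor. For $t_r$ a power of $2$ (possible only when $q$ is even), the theorem's alternatives furnish: $A_2$ gives $A_{2^a}$ for all $a\ge 1$; in the $4\mid q$ variant with $A_4$ only, we instead obtain $A_{2^a}$ for $a\ge 2$, leaving just the single case $a=1$ (i.e.\ $r=q/2$) to be dealt with by $B_2$.

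The residual case $t_r=2$ is the delicate one and is where $B_2$ enters. Here $g_r(n)=(-1)^{f(n)}$; if $A_2$ fails then by the Delange mean-value theorem $M(g_r)$ equals its Euler product, so the task is to force a local factor to vanish. Hypothesis $B_2$ makes $f(2^k)$ odd for every $k\ge 1$, so $g_r(2^k)=-1$, and the factor at $2$ collapses:
$$ \Big(1-\tfrac12\Big)\sum_{k\ge 0}\frac{g_r(2^k)}{2^k} \;=\; \tfrac12\Big(1+\sum_{k\ge 1}\tfrac{-1}{2^k}\Big) \;=\; 0, $$
yielding $M(g_r)=0$. For necessity I would argue contrapositively: if any listed condition fails, exhibit an $r$ for which neither the $A$-type divergence nor the $B_2$-driven local vanishing applies, so the Delange Euler product is nonzero and uniform distribution fails. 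The principal obstacle is precisely this $2$-adic case analysis --- verifying that $B_2$ is essentially the \emph{only} mechanism besides $A_2$ that can annihilate the $p=2$ Euler factor, and that in the $4\mid q$ regime $B_2$ is \emph{insufficient} for $t_r\ge 4$ (the partial sums $\sum_k i^{f(2^k)}/2^k$ do not telescope when $f(2^k)$ is merely odd), making $A_4$ a genuinely necessary additional hypothesis.
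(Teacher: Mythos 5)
The paper does not prove Theorem~\ref{Delange}: it is quoted verbatim from Delange's 1969 paper \cite{Delange} with no argument given, so there is no in-paper proof to compare against. Your outline is nonetheless the standard (and essentially Delange's own) route: reduce to the Weyl-type criterion, observe that $g_r(n)=e(rf(n)/q)$ is a unimodular multiplicative function, and apply the Hal\'asz--Delange dichotomy. The key structural facts you identify are correct and are exactly what makes the case analysis in the theorem work: $\sum_p(1-\operatorname{Re}g_r(p))/p$ diverges iff $A_{t_r}$ holds (with $t_r=q/\gcd(q,r)$); the implication $A_s\Rightarrow A_u$ for $s\mid u$ disposes of all $t_r$ with an odd prime factor; and when $A_{t_r}$ fails, the Delange Euler product controls $M(g_r)$, with the factor at an odd prime $\ell$ never vanishing (since $\bigl|\sum_{k\ge1}g_r(\ell^k)/\ell^k\bigr|\le 1/(\ell-1)<1$) while the factor at $\ell=2$ vanishes iff $g_r(2^k)=-1$ for every $k\ge1$ --- which is precisely the $B$-type condition. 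Two small remarks to tighten the writeup: (1) Delange's 1961 mean-value theorem requires convergence of the full complex series $\sum_p(1-g_r(p))/p$, not merely its real part, but since $g_r(p)$ ranges over a fixed finite set of roots of unity the two convergences are equivalent (the non-trivial values are bounded away from $1$), so this is harmless once stated; (2) your necessity paragraph is currently a promissory note --- you should make explicit, as above, that only the $\ell=2$ Euler factor can ever vanish, after which the contrapositive case analysis (choosing $r=q/p$, $r=q/2$, or $r=q/4$ according to which hypothesis fails) is routine and does close the argument. As written the proposal is a correct plan but not yet a complete proof.
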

\noindent
We are interested in proving uniform distribution of a class of additive functions where the modulus is allowed to vary. To illustrate, consider the sum of prime divisors function $A(n) = \sum \limits_{p^k \mid \mid n} kp$. It was proved by Pollack and Singha Roy that for a fixed $K \ge 1$ and any residue class $a \mod{q}$,
\begin{eqnarray*}
\sum \limits_{\overset{n \le x}{A(n) \equiv a \Mod{q}}}1 \sim \frac{x}{q} & \text{as $x \rightarrow \infty$,}
\end{eqnarray*}
for $q \le (\log x)^K$. This was a byproduct of their work on the distribution of multiplicative functions in coprime residue classes\cite{PaulAkash}.\\
\\
In this paper, we will consider \textbf{polynomially-defined additive} functions $f$: Additive functions $f$ such that $f(p) = F(p)$ for all primes $p$, where $F(x)$ is a nonconstant polynomial in $\mathbb{Z}[x]$. In the next section, given a polynomially-defined additive function $f$ satisfying mild conditions, we explicitly describe the collection of integers $q$ such that $f$ is uniformly distributed modulo $q$. We will then set up the same framework used in \cite{PaulAkash} by Pollack and Singha Roy. (See also the paper of Katai \cite{Katai}, where a broadly similar approach is taken.) A new ingredient is the use of estimates for the size of sets of the form:
$$\mathcal{V}_{F,q,J}(w) = \left\{(v_1,\cdots,v_J) \mod{q}: (v_1,\hdots,v_J,q) = 1,\sum \limits_{j=1}^JF(v_j) \equiv w \Mod{q} \right\}.$$
These estimates require bounds on exponential sums adopted from Cochrane \cite{Expsum} and Loh \cite{Residue}. The end result is as follows:
\begin{theorem}\label{Main Theorem}
Fix $K \ge 1$ and $\epsilon \in (0,1]$. Let $f$ be a polynomially-defined additive function such that $f(p) = F(p)$ for all primes $p$ where $F(x) \in \mathbb{Z}[x]$ with degree $d$. Let $\mathcal{S}_f$ be the set of all positive integers $q$ such that $f$ is uniformly distributed modulo $q$. Then as $x \rightarrow  \infty$,
$$\sum \limits_{\overset{n \le x}{f(n) \equiv a\Mod{q}}}1 \sim \frac{x}{q},$$
for all residue classes $a\mod{q}$ and $q \in \mathcal{S}_f \cap [1,(\log x)^K]$ provided that either
\begin{itemize}
    \item $d = 1$, or
    \item $q \le (\log x)^{\left(1 - \epsilon\right)\left(1-\frac{1}{d}\right)^{-1}}$.
\end{itemize} 
\end{theorem}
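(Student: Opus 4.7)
The plan is to reduce uniform distribution to an exponential-sum estimate via the orthogonality criterion recalled in the introduction. Writing $e_q(t):=\exp(2\pi i t/q)$,
$$\#\{n\le x: f(n)\equiv a\Mod{q}\} = \frac{x}{q} + O(1) + \frac{1}{q}\sum_{r=1}^{q-1}e_q(-ar)\,T(r),$$
where $T(r):=\sum_{n\le x} e_q(rf(n))$, so it suffices to prove $\sum_{r=1}^{q-1}|T(r)| = o(x)$ uniformly in $a$ and in $q\in\mathcal{S}_f$ within the stated range.

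To exploit the additive structure of $f$, I would follow the framework of \cite{PaulAkash}. Fix parameters $y<Y$ (for instance $y=(\log x)^C$ and $Y=\exp(\sqrt{\log x})$) and an integer $J$ of order $\log\log x$. A Tur\'an--Kubilius estimate shows that, up to $o(x)$ exceptional $n$, every $n\le x$ admits a factorization $n=p_1\cdots p_J m$ with $y<p_1<\cdots<p_J\le Y$ and $m$ free of prime factors in $(y,Y]$. By additivity $f(n)=f(m)+\sum_{j=1}^J F(p_j)$, so after removing the distinctness condition on the $p_j$ at negligible cost,
$$T(r) \;=\; \frac{1}{J!}\sum_m e_q(rf(m)) \left(\sum_{\substack{y<p\le Y \\ p\,\nmid\, m}} e_q(rF(p))\right)^{\!J} + o(x).$$
Since $q\le(\log x)^K$, Siegel--Walfisz (or a quantitative PNT in arithmetic progressions) collapses the inner prime sum to a complete sum modulo $q$, so that $|T(r)|$ is essentially controlled by $x\cdot|S_q(r)/\phi(q)|^J$, where
$$S_q(r):=\sum_{\substack{v\Mod{q} \\ (v,q)=1}}e_q(rF(v))$$
is precisely the Fourier dual of $\#\mathcal{V}_{F,q,1}(\cdot)$.

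The crux is bounding $S_q(r)$ uniformly for $1\le r\le q-1$. For $d=1$ the sum is essentially a Ramanujan sum, handled by separating the $r$'s according to $\gcd(r,q)$: the condition $q\in\mathcal{S}_f$ (via Theorem \ref{Delange}) rules out the $r$'s for which $|S_q(r)|$ is close to $\phi(q)$, and the remaining contributions admit an iterated saving that dominates $q\le(\log x)^K$ for every $K$. For $d\ge 2$, the Cochrane \cite{Expsum} and Loh \cite{Residue} estimates on complete exponential sums with polynomial argument yield $|S_q(r)|\ll_F q^{1-1/d}$ for non-degenerate $r$, hence $|S_q(r)/\phi(q)|^J \ll q^{-J/d}$ up to harmless factors. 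Combining this with the $q$ terms in the outer $r$-sum and balancing against the auxiliary error terms (the Siegel--Walfisz tail, removal of distinctness, and the degenerate $r$ for which Cochrane--Loh must be replaced by a weaker estimate) produces the degree-dependent restriction $q\le(\log x)^{(1-\epsilon)(1-1/d)^{-1}}$.

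The principal obstacle is the uniformity in $r$ of the bound on $S_q(r)$: the Cochrane--Loh estimates deteriorate whenever $\gcd(r\cdot\operatorname{lead}(F),q)$ is a large divisor of $q$, and excluding such degenerate pairs $(r,q)$ is exactly where the hypothesis $q\in\mathcal{S}_f$ is used. Via Delange's criterion and a prime-by-prime analysis of $q$, this hypothesis translates into the algebraic statement that no nonzero $r$ makes $rF$ behave like a constant function on $(\mathbb{Z}/q)^*$ modulo a large divisor of $q$. Matching this algebraic input to the analytic bounds of \cite{Expsum,Residue}, with full uniformity in $r$ and $q$, is the main technical burden; once it is in place, the summations over $r$, over $m$, and over the $J$-tuples of primes are routine.
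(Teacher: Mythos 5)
Your global strategy (orthogonality $\rightarrow$ factor $n$ into $J$ large primes times a cofactor $\rightarrow$ Siegel–Walfisz to complete the prime sums $\rightarrow$ Cochrane–Loh to estimate the resulting unit-group exponential sums) matches the paper's skeleton, but there are two genuine gaps, one of which is fatal as written.

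The fatal gap is in your treatment of degenerate $r$. You assert that the hypothesis $q\in\mathcal{S}_f$ ``translates into the algebraic statement that no nonzero $r$ makes $rF$ behave like a constant function on $(\mathbb{Z}/q)^*$ modulo a large divisor of $q$,'' and you plan to use this to exclude the $r$ for which $|S_q(r)|$ is close to $\varphi(q)$. That statement is false. Take $F(x)=x^2+x+1$ and $q=2^k$: since $F(p)$ is odd for every prime $p$, condition $A_2$ of Delange holds and $q\in\mathcal{S}_f$ for all $k$; yet for $r=2^{k-1}$ one has $rF(v)\equiv 2^{k-1}\Mod{2^k}$ for every odd $v$, so $|S_q(r)|=\varphi(q)$. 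For such $r$ the bound $|T(r)|\lesssim x\cdot|S_q(r)/\varphi(q)|^J$ gives nothing, and your reduction to $\sum_{r=1}^{q-1}|T(r)|=o(x)$ stalls. The paper does not exclude these $r$; rather, it isolates the divisor $M=M(F,q)\ll_F 1$ of $q$ on which $F$ is constant on the unit group, shows via the $X_\ell$-analysis (Lemma \ref{X_ell}) that the degenerate terms sum exactly to an indicator $\mathbbm{1}_{F(1)J\equiv w \Mod{M}}$, so that $\#\mathcal{V}(w)\sim \mathbbm{1}_{F(1)J\equiv w\Mod{M}}\cdot M\varphi(q)^J/q$, and then closes the loop by applying Delange's \emph{fixed}-modulus theorem to the bounded modulus $M$. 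This last step is precisely where $q\in\mathcal{S}_f$ (hence $M\in\mathcal{S}_f$) is used --- not to forbid degenerate $r$, but to equidistribute $f(n)$ modulo the small residual modulus $M$.

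A second, smaller misattribution: you expect the degree-dependent constraint $q\le(\log x)^{(1-\epsilon)(1-1/d)^{-1}}$ to emerge from balancing the Cochrane–Loh saving against the outer $r$-sum. In the paper it instead comes from the \emph{inconvenient-$n$} estimate (Lemma \ref{Inconvenient congruence}): there one needs the number $\xi(q)$ of roots of $F(v)\equiv a'\Mod{q}$ to be $o((\log x)^{1-3\epsilon/4})$, and Konyagin's bound $\xi(q)=O_F(q^{1-1/d})$ (saving $1/d$, not $1/(d+1)$) is what forces the exponent. The Cochrane–Loh savings enter only in the main-term asymptotics for $\#\mathcal{V}_m$, where the factor $(\cdot)^J$ makes any positive saving more than sufficient; they are not the bottleneck. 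If you pursued your plan literally you would either get a weaker exponent from the $1/(d+1)$ saving or, more likely, fail to see where the constraint is actually needed.
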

\noindent
As we now explain, the extra condition on $q$ in Theorem \ref{Main Theorem} when $d > 1$ is essentially optimal. Fix an integer $d \ge 2$ and define $F(x) = (x-1)^d + 1$. Let $f$ be the completely additive function where $f(p) = F(p)$ for all primes $p$ and let $q = q_1^d$. By Theorem \ref{Delange}, $f$ is uniformly distributed for all $q$. Notice that $F(p) \equiv 1 \Mod{q}$ if and only if $p \equiv 1 \Mod{q_1}$. Hence, by Siegel-Walfisz's theorem, there are at least $\frac{1 + o(1)}{\varphi(q_1)}\frac{x}{\log x}$ primes $p \le x$ with $F(p) \equiv 1 \Mod{q}$. Therefore, the residue class $1 \mod q$ is overrepresented if $\frac{1}{\varphi(q_1)}\frac{x}{\log x}$ is much bigger than $\frac{x}{q}$, for example if $q^{1-1/d} > (\log x )^{(1+\delta)}$ for some fixed $\delta > 0$. This means that for uniform distribution we want $q$ to be no more than $\approx (\log x)^{(1-\frac{1}{d})^{-1}}$. So the condition is essentially optimal.
\section{Polynomial-defined arithmetic functions that are uniformly distributed modulo $q$}
Given a polynomially-defined additive function, we will use Delange's criterion in Theorem \ref{Delange} to determine the collection of integers $q$ such that $f$ is uniformly distributed modulo $q$. To check condition $B_2$, we will impose the restriction: $f(2^k)$ is odd for all $k \ge 2$.
\begin{proposition}\label{Class of integers}
Fix $q \in \mathbb{N}\backslash\{1\}$. Let $f$ be a polynomially-defined additive function, such that $f(p) = F(p)$ for all primes $p$, $F(x) \in \mathbb{Z}[x]$. Suppose $f(2^k)$ is an odd integer for all $k \ge 2$. If there exist an odd prime $p$ such that $p \mid q$ and $F(x) \mod{p}$ has a factor of
$$\prod \limits_{i=1}^{p-1} (x-i),$$
then $f$ is not uniformly distributed modulo $q$.\\
\\
If $2 \mid \mid q$ and $F$ has an even constant and $F(x) \mod{2}$ has a factor of $(x-1)$, then $f$ is not uniformly distributed modulo $q$.\\
\\
If $4 \mid q$ and the alternating sum of coefficients of F is either $0$ or $2\mod{4}$, then $f$ is not uniformly distributed modulo $q$.\\
\\
If neither of these conditions hold, then $f$ is uniformly distributed modulo $q$.
\end{proposition}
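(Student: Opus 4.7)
The plan is to apply Delange's criterion (Theorem~\ref{Delange}) after rewriting each of the conditions $A_t$ and $B_2$ as an explicit arithmetic statement about $F$. The key input is that for any prime $p \nmid t$, $f(p) = F(p) \equiv F(p \bmod t) \pmod{t}$, and $p \bmod t$ lies in $(\mathbb{Z}/t\mathbb{Z})^{\ast}$. Splitting the Delange sum by residue class and invoking the divergence of $\sum_{p \equiv a \pmod{t}} 1/p$ for each unit class $a$ (Dirichlet, or Mertens in arithmetic progressions), one finds that $A_t$ holds if and only if there exists some $a$ coprime to $t$ with $F(a) \not\equiv 0 \pmod{t}$.

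Specialising: for an odd prime $p$, $A_p$ fails iff $F(i) \equiv 0 \pmod{p}$ for every $i = 1,\ldots,p-1$, equivalently $\prod_{i=1}^{p-1}(x-i) \mid F(x)$ in $\mathbb{F}_p[x]$. For $t=2$, $A_2$ fails iff $F(1)$ is even, and since $F(1) \equiv F(-1) \pmod{2}$ this is the same as $F(-1)$ being even, or equivalently $(x-1) \mid F(x) \pmod{2}$. For $t = 4$, $A_4$ fails iff $F(1) \equiv F(-1) \equiv 0 \pmod{4}$. Finally, since $f(2^k)$ is odd for every $k \ge 2$ by hypothesis, $B_2$ reduces to the requirement that $f(2) = F(2) \equiv a_0 \pmod{2}$ be odd, i.e.\ that $F$ have odd constant term.

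Matching these translations to the three cases of Theorem~\ref{Delange} then gives the proposition. For $q$ odd, the only requirement is $A_p$ for every $p \mid q$, whose negation is exactly the first stated condition. For $2 \mid\mid q$, one needs $A_p$ for odd $p \mid q$ together with $A_2 \vee B_2$; the simultaneous failure of $A_2$ and $B_2$ translates to ``$F$ has even constant and $(x-1) \mid F(x) \pmod 2$,'' which is the second stated condition. For $4 \mid q$, one needs $A_p$ for odd $p \mid q$ together with $A_2 \vee (B_2 \wedge A_4)$; unpacking the failure of this disjunction via the translations above, and reading off the parity and residue mod~$4$ of the alternating sum $F(-1)$, yields the third stated condition. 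Conversely, if none of the three conditions holds then all the Delange hypotheses relevant to the structure of $q$ are met, so $f$ is uniformly distributed modulo $q$.

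The main point requiring care is the $4 \mid q$ case, where the disjunction $A_2 \vee (B_2 \wedge A_4)$ must be unpacked in terms of the constant term of $F$ together with the residues of $F(-1)$ modulo $2$ and modulo $4$. Everything else reduces, once the Dirichlet-based translation of $A_t$ in the first paragraph is in hand, to a routine case analysis over the three structures of $q$ appearing in Theorem~\ref{Delange}.
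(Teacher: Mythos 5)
Your overall strategy is the same as the paper's: use Dirichlet's theorem to translate each Delange condition $A_t$ into the statement ``there exists a unit $a \bmod t$ with $F(a) \not\equiv 0 \pmod t$,'' then unwind this into explicit congruence conditions on the coefficients of $F$ for each of the three structures of $q$. Your translations of $A_p$, $A_2$, $A_4$, $B_2$ all match the paper, and you even state the logical structure of the $4 \mid q$ case more explicitly than the paper does (the paper only says ``$f$ is not u.d.\ if $A_4$ fails,'' silently using that failure of $A_4$ forces failure of $A_2$).

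However, in the $4 \mid q$ case you assert without verification that unpacking $\lnot A_2 \wedge \lnot(B_2 \wedge A_4)$ ``yields the third stated condition,'' and this is precisely where care is needed, because it does not. The failure of $A_2$ alone is $F(1)$ even, i.e.\ $F(-1)$ even, i.e.\ the alternating sum being $0$ or $2 \pmod 4$ — but that is only half the condition. You also need $\lnot B_2 \vee \lnot A_4$, which is not automatic: take $F(x) = x^2 + 1$. Here the alternating sum $F(-1) = 2 \equiv 2 \pmod 4$, so the stated third condition is met; yet $a_0 = 1$ is odd (so $B_2$ holds, given the standing hypothesis on $f(2^k)$) and $F(1) = 2 \not\equiv 0 \pmod 4$ (so $A_4$ holds), whence Delange's criterion says $f$ \emph{is} uniformly distributed modulo $q$. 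So the unpacking does not reduce to ``alternating sum $\equiv 0$ or $2 \pmod 4$.'' For comparison, the paper's own argument treats only the sub-case $\lnot A_4$, from which it correctly derives $F(-1) \equiv 0 \pmod 4$, a strictly narrower conclusion than the $0$-or-$2$ claim in the proposition statement; the case $\lnot A_2 \wedge \lnot B_2 \wedge A_4$ is not addressed at all. Your sketch inherits this gap rather than resolving it. (The final ``if none of the conditions hold then $f$ is u.d.'' direction is unaffected, since the negation of the third condition gives $F(-1)$ odd, hence $A_2$ holds.)
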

\begin{proof}
First, let $p$ be an odd prime divisor $p$ of $q$. By Theorem \ref{Delange}, $f$ is not uniformly distributed modulo $q$ if $A_p$ does not hold. Now we will use the fact that, for all $d \ge 2$,
\begin{center}
    $A_d$ holds if and only if there exist $a$ such that $(a,d) = 1$ and $f(a) \not \equiv 0\Mod{d}$ 
\end{center}
This follows from Dirichlet's theorem on primes in arithmetic progressions. Therefore, the only polynomials where $A_p$ does not hold are the polynomials $F$ such that $F \mod{p}$ has a factor of $\prod \limits_{i=1}^{p-1} (x-i)$.\\
\\
Suppose $2 \mid \mid q$. By Theorem \ref{Delange}, $f$ is not uniformly distributed modulo $q$ if $A_2$ and $B_2$ both fail. The $A_2$ case is similar to the last proof: the polynomials where $A_2$ does not are the polynomials with the sum of its coefficients equal to 0 modulo 2. This is equivalent to polynomials $F$ where $F \mod{2}$ has a factor of $(x-1)$. The $B_2$ case fails when the constant term is even.\\
\\
Suppose $4\mid q$. By Theorem \ref{Delange}, $f$ is not uniformly distributed modulo $q$ if $A_4$ does not hold. Let $F(x) = \sum \limits_{i=0}^d a_ix^i$ where $a_i \in \mathbb{Z}$. If $F(1) \equiv F(3) \equiv 0 \Mod{4}$. Then $\sum \limits_{\overset{0 \le i \le d}{\text{$i$ even}}} a_i +\sum \limits_{\overset{0 \le i \le d}{\text{$i$ odd}}} a_i \equiv 0 \Mod 4$ and $\sum \limits_{\overset{0 \le i \le d}{\text{$i$ even}}} a_i - \sum \limits_{\overset{0 \le i \le d}{\text{$i$ odd}}} a_i \equiv 0 \Mod 4$. The only solutions to these system of congruences are $\sum \limits_{\overset{0 \le i \le d}{\text{$i$ even}}} a_i \equiv \sum \limits_{\overset{0 \le i \le d}{\text{$i$ odd}}} a_i \equiv 0 \Mod 4$ and $\sum \limits_{\overset{0 \le i \le d}{\text{$i$ even}}} a_i \equiv \sum \limits_{\overset{0 \le i \le d}{\text{$i$ odd}}} a_i \equiv 2 \Mod 4$. \\
\\
If $F$ does not satisfy any of those conditions mentioned, the criteria hold for all the cases in Theorem \ref{Delange}. Thus, $f$ is uniformly distributed modulo $q$.
\end{proof}
\section{Bounds on polynomials composed with additive characters}
We will use bounds from Cochrane \cite{Expsum} and Loh \cite{Residue}. The first bound in \cite{Expsum} applies to polynomials that are nonconstant modulo $\ell$ for $\ell$ prime; a polynomial $F$ is \textbf{nonconstant modulo $\ell$} if and only if the map $F:\mathbb{Z}/\ell\mathbb{Z} \mapsto \mathbb{Z}/\ell\mathbb{Z}$ is nonconstant.
\begin{lemma}\label{Cochrane}
For all primes $\ell$ and polynomials $F(x) \in \mathbb{Z}[x]$ that are nonconstant modulo $\ell$,
$$\left \lvert \sum \limits_{\overset{v \mod \ell^k}{(v,\ell) = 1}} {\exp\left(\frac{2 \pi i F(v)}{\ell^k} \right)} \right \rvert \le 8.82 \ell^{k(1-\frac{1}{d+1})}. $$
\end{lemma}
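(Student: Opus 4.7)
The plan is to invoke Cochrane's bounds on complete exponential sums from \cite{Expsum} and then account for the restriction to units. I first separate the sum over units into the complete sum minus the sum over multiples of $\ell$:
$$\sum_{\substack{v\bmod \ell^k \\ (v,\ell)=1}} \exp\!\left(\frac{2\pi i F(v)}{\ell^k}\right) = \sum_{v\bmod \ell^k}\exp\!\left(\frac{2\pi i F(v)}{\ell^k}\right) - \sum_{u\bmod \ell^{k-1}}\exp\!\left(\frac{2\pi i F(\ell u)}{\ell^k}\right).$$
The second term, after the substitution $v=\ell u$ and writing $F(\ell u)=F(0)+\ell G(u)$ with $G\in\mathbb{Z}[u]$ of degree $d$, is an exponential sum modulo $\ell^{k-1}$ of degree $d$ that can either be iterated recursively or bounded trivially by $\ell^{k-1}$; in either case it is absorbed into the target bound. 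The dominant contribution thus comes from the complete sum.

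For the complete sum I would follow Cochrane's critical-point analysis. Decompose $v=\alpha+\ell w$ with $\alpha$ running over residues mod $\ell$ and $w$ over residues mod $\ell^{k-1}$, and Taylor expand $F(\alpha+\ell w)=F(\alpha)+\ell w F'(\alpha)+\ell^2 H_\alpha(w)$ with $H_\alpha\in\mathbb{Z}[w]$. When $F'(\alpha)\not\equiv 0 \pmod{\ell}$, the inner sum over $w$ vanishes by orthogonality of additive characters; otherwise $\alpha$ is one of at most $\deg F'=d-1$ critical points of $F$ modulo $\ell$, and a change of variables reduces each remaining inner sum to one modulo $\ell^{k-2}$ against a shifted polynomial of the same degree. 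Iterating yields a recursion of roughly the shape $|S(F,\ell^k)|\le (d-1)\,\ell\,|S(\tilde F,\ell^{k-2})|$, and balancing the geometric growth in the number of surviving critical points against the shrinking modulus across $\lfloor k/2 \rfloor$ iterations produces the exponent $1-\tfrac{1}{d+1}$. The base case $k=1$ is handled by the Weil bound $|S^*(F,\ell)|\le (d-1)\sqrt{\ell}$, which is easily seen to be dominated by $8.82\,\ell^{1-1/(d+1)}$ for every admissible $d$ and $\ell$.

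The main obstacle is extracting the explicit numerical constant $8.82$ uniformly over all primes $\ell$, all degrees $d$, and all $F$ nonconstant modulo $\ell$. Degenerate cases, such as when the leading coefficient of $F$ vanishes modulo $\ell$ (so the effective degree drops partway through the recursion) or when a single critical point has very high multiplicity, require refining the recursion by grouping contributions according to the $\ell$-adic valuation of $F'(\alpha)$; this is precisely the refinement carried out in \cite{Expsum}. Rather than rederive the bookkeeping, I would conclude by quoting Cochrane's main theorem with the appropriate parameters, and verifying that our hypothesis ($F$ nonconstant mod $\ell$) matches his, to extract the constant $8.82$.
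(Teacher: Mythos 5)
The paper does not actually prove Lemma~\ref{Cochrane}; it is quoted verbatim as a known estimate from Cochrane's paper \cite{Expsum}. Since you ultimately say that you would ``conclude by quoting Cochrane's main theorem,'' your answer and the paper's are in essential agreement: both defer to the cited result. That said, a few remarks on the sketch you wrap around the citation are in order, because parts of it would not survive an attempt to make them rigorous.

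First, the opening decomposition into the complete sum minus the sum over multiples of $\ell$ is not how Cochrane's argument proceeds: his theorem bounds the restricted sum $\sum_{(v,\ell)=1}$ (equivalently, the mixed sum twisted by the principal multiplicative character) directly, and in fact the restriction to units is what makes the hypothesis ``$F$ nonconstant modulo $\ell$'' the natural one. If you insist on the complete-minus-multiples split, the ``multiples'' piece is a complete sum to modulus $\ell^{k-1}$ against a polynomial that may have dropped degree or become constant modulo $\ell$, and neither bounding it trivially by $\ell^{k-1}$ (which exceeds $8.82\,\ell^{k(1-1/(d+1))}$ once $k$ is larger than roughly $(d+1)(1+\log_\ell 8.82)$) nor ``iterating recursively'' is automatic; you would have to track the loss of the nonconstancy hypothesis. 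Second, the recursion you write down, $|S(F,\ell^k)|\le (d-1)\,\ell\,|S(\tilde F,\ell^{k-2})|$, does not by itself yield the exponent $1-\tfrac{1}{d+1}$: iterating it gives something of the shape $((d-1)\ell)^{k/2}$, which for $\ell$ comparable to $(d-1)^2$ is already of order $\ell^{3k/4}$, far above the target for $d$ large. The exponent in Cochrane's theorem comes from a finer analysis keeping track of the $\ell$-adic valuation of $F'$ at each critical point, not from this crude count. Third, the base-case claim that $(d-1)\sqrt{\ell}\le 8.82\,\ell^{1-1/(d+1)}$ ``for every admissible $d$ and $\ell$'' is false at face value (take $d$ near $\ell$); what rescues it is that ``nonconstant modulo $\ell$'' forces the reduced degree to be at most $\min(d,\ell-1)$, so the Weil bound should be applied to the reduction of $F$. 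None of these issues undermines the final step of the proposal, which is to cite \cite{Expsum}, but they do mean the sketch as written should not be read as an outline of a self-contained proof.
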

\noindent
Lemma \ref{Cochrane} cannot be applied to all polynomials we are interested in. For example, let $F(x) = x^2+x+1$ and $f$ be the completely additive function such that $f(p) = F(p)$ for all primes $p$. By Theorem \ref{Delange}, $f$ is uniformly distributed modulo $q$ for all positive integers $q$ but $F$ is constant modulo 2. So, in the rest of this section, we will focus on the polynomials Lemma \ref{Cochrane} does not apply to. \\
\\
Firstly, notice that $\left \lvert \sum \limits_{\overset{v \mod \ell^k}{(v,\ell) = 1}} {\exp\left(\frac{2 \pi i F(v)}{\ell^k} \right)} \right \rvert = \left \lvert \sum \limits_{\overset{v \mod \ell^k}{(v,\ell) = 1}} {\exp\left(\frac{2 \pi i (F(v)+t)}{\ell^k} \right)} \right \rvert $ for all $t \in \mathbb{Z}$ so the constant term of $F$ does not play a role. Also, if $F$ is constant modulo $\ell$ with $d \ge 1$, there have to be at least 2 terms in the polynomial. \\
\\
Next, we will consider the polynomials where $\ell$ does not divide the content of $F$: We define the \textbf{content of $F$} to be the greatest common divisor of the coefficients of $F$. The following lemma, which gives an upper bound on the exponential sum with a condition on the content of $F$, is stated in \cite{Residue}. 
\begin{lemma}\label{Loh}
Let $F(x)$ be a polynomial in $\mathbb{Z}[x]$ such that
$$F(x) = a_sx^{d_s} + a_{s-1}x^{d_{s-1}} + \cdots+ a_2x^{d_2} + a_1x^{d_1} $$
with $s \ge 2$ and $d_s > d_{s-1} > \cdots > d_1 \ge 1$, and $a_i \in\mathbb{Z}\backslash \{0\}$.\\
\\
Suppose $\ell$ is prime and $k$ is a positive integer.\\
\\
If $\ell > d_s$ and $\ell$ does not divide the content of $F$, then
$$\left \lvert \sum \limits_{\overset{v \mod \ell^k}{(v,\ell) = 1}} {\exp\left(\frac{2 \pi i F(v)}{\ell^k} \right)} \right \rvert \le (d_s - 1)\ell^{m(1-\frac{1}{s})}.$$
If $\ell \le d_s$ and $\ell$ does not divide the content of $F$, then
$$\left \lvert \sum \limits_{\overset{v \mod \ell^k}{(v,\ell) = 1}} {\exp\left(\frac{2 \pi i F(v)}{\ell^k} \right)} \right \rvert \le R(\ell^{-\tau}F') \ell^{\frac{\tau + 1}{s}}\ell^{m(1-\frac{1}{s})}$$
where $\tau$ is the largest nonnegative integer such that $\ell^\tau$ divides the content of $F'(x)$ and $R(F)$ is the number of roots of the congruence
$$F(X) \equiv 0\Mod{\ell}.$$
\end{lemma}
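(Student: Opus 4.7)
The plan is to reduce the sum modulo $\ell^k$ to a sum over critical points of $F$ modulo $\ell$ by a Hensel-lifting / Taylor-expansion argument, and then to bound what remains using either Weil-type estimates (when $\ell > d_s$) or direct root counting (when $\ell \le d_s$).

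First I would write $v = u + \ell^{\lceil k/2 \rceil} w$ with $u$ modulo $\ell^{\lceil k/2 \rceil}$ and $w$ modulo $\ell^{\lfloor k/2 \rfloor}$. Taylor expansion gives $F(v) \equiv F(u) + \ell^{\lceil k/2 \rceil} w F'(u) \pmod{\ell^k}$, so the inner sum over $w$ vanishes unless $F'(u) \equiv 0 \pmod{\ell^{\lfloor k/2 \rfloor}}$. This reduces the original sum to $\ell^{\lfloor k/2 \rfloor}$ times a sum over lifts of the critical points of $F$ mod $\ell$, and iterating this procedure sharpens the reduction until one is left with a bounded sum running over the roots of $F'$ mod $\ell$.

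For $\ell > d_s$ the polynomial $F'$ has degree $d_s - 1$ and, since $\ell$ does not divide the content of $F$, is not identically zero modulo $\ell$; hence $F'$ has at most $d_s - 1$ roots modulo $\ell$, and a classical stationary-phase argument bounds each root's contribution. To push the exponent down from the Weyl value $1 - 1/d_s$ to $1 - 1/s$ (which reflects the sparsity of $F$ rather than its degree), one exploits the $s$-term structure: after a multiplicative substitution $x \mapsto cx$, averaging over $c$ coprime to $\ell$ spreads cancellation across the $s$ monomials in $F$ and produces the additional saving that the sparse structure allows.

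For $\ell \le d_s$ the derivative $F'$ may be identically zero modulo $\ell$ (for instance when every surviving exponent $d_i$ is divisible by $\ell$), so the critical-point picture collapses. The remedy is to extract the largest power $\ell^\tau$ dividing the content of $F'$ and to redo the lifting with $\ell^{-\tau} F'$ in place of $F'$; this introduces a multiplicative factor $\ell^{(\tau+1)/s}$ and the root-count $R(\ell^{-\tau} F')$ into the final bound. The main obstacle is precisely this second case: one must track the $\ell$-adic valuation of $F'$ alongside the root count of its normalization and combine them carefully enough that the resulting estimate is still sharp enough to drive the downstream application to $\mathcal{V}_{F,q,J}(w)$.
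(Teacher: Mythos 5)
The paper offers no proof of this lemma: it is quoted verbatim as a result of Loh, with the sentence ``The following lemma, which gives an upper bound on the exponential sum with a condition on the content of $F$, is stated in \cite{Residue}.'' So there is no paper proof to compare your attempt against; a citation, not a derivation, is what the paper supplies.

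As an independent sketch, your proposal is heading in a reasonable direction but has a genuine gap at its crux. The van der Corput/Hensel reduction you describe --- splitting $v = u + \ell^{\lceil k/2\rceil}w$, Taylor-expanding, and observing the inner $w$-sum vanishes unless $F'(u)\equiv 0\pmod{\ell^{\lfloor k/2\rfloor}}$ --- is the standard first move and, for $\ell > d_s$ with $F'$ having simple roots, it already yields a bound of order $(d_s-1)\ell^{k/2}$, which for $s\ge 2$ is at least as strong as the claimed $\ell^{k(1-1/s)}$. The real content of the lemma is in making the bound hold uniformly for all $k\ge 1$ (in particular $k=1$, where one needs a Mordell-type sparse bound rather than Weil's), and in tracking the constants through the Hensel lifts when $F'$ has repeated roots or when $\ell \le d_s$. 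Your explanation of the sparsity saving --- ``after a multiplicative substitution $x\mapsto cx$, averaging over $c$ coprime to $\ell$ spreads cancellation across the $s$ monomials'' --- is not an argument: the substitution by itself leaves $|S|$ unchanged, so to turn it into a bound you would need a concrete moment or counting step (e.g.\ raise $S$ to a power, average over $c$, and count coincidences of $(a_1c^{d_1},\dots,a_sc^{d_s})$ modulo $\ell^k$), and none of that is sketched. Likewise in the case $\ell\le d_s$ you name the right objects ($\tau$, $R(\ell^{-\tau}F')$) but give no mechanism producing the factor $\ell^{(\tau+1)/s}$. Since the paper simply cites Loh, the appropriate resolution is to leave this lemma as a reference; if you do want to prove it, you would need to develop the sparse Mordell/Hensel machinery in full rather than gesturing at it.
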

\noindent
Since $\tau$ and $R(\ell^{-\tau} F')$ depend only on $F$, they are both $O_F(1)$. Also, if $d$ is the degree of $f$, then $s < d+1$ . Hence, the inequalities in Lemma \ref{Loh} can be combined into a single inequality so that a uniform upper bound on the exponential sum is $O_F(\ell^{m(1-\frac{1}{d+1})})$. Still, there are polynomials that Lemma \ref{Cochrane} and Lemma \ref{Loh} cannot be applied to, the polynomials $F$ where $\ell$ does divide the content of $F$ without its constant term and $F$ is constant modulo $\ell$. For instance, $F(x) = 2x+1$ is constant modulo 2 and 2 divides the content of $F(x) - 1$ but the completely additive function $f$ such that $f(p) = F(p)$ for all primes $p$ is uniformly distributed modulo $q$ for all positive integers $q$.\\
\\
Let $t$ be the largest nonnegative integer such that $\ell^t$ divides the content of $F(x)$ without its constant term $c_0$. Then, $F(x) = \ell^tG(x) + c_0$ where $G(x) \in \mathbb{Z}[x]$ and $\ell$ does not divide the content of $G(x)$. Then for $k > t$,
$$\left \lvert \sum \limits_{\overset{v \mod \ell^k}{(v,\ell) = 1}} {\exp\left(\frac{2 \pi i F(v)}{\ell^k} \right)} \right \rvert =\left \lvert \sum \limits_{\overset{v \mod \ell^k}{(v,\ell) = 1}} {\exp\left(\frac{2 \pi i G(v)}{\ell^{m-t}} \right)} \right \rvert = \ell^{t}\left \lvert \sum \limits_{\overset{v \mod \ell^{m-t}}{(v,\ell) = 1}} {\exp\left(\frac{2 \pi i G(v)}{\ell^{m-t}} \right)} \right \rvert. $$
We now apply the bounds that we already have on the last sum. Since $\ell^t$ divides the content of $F$ without its constant term, $\ell^t$ is $O_F(1)$. Combining all the bounds that we have, we can form a corollary:
\begin{corollary}\label{Corollary}
Let $F(x)$ be a nonconstant polynomial in $\mathbb{Z}[x]$. Then, for all positive integers $k$ and primes $\ell$,
$$\left \lvert \sum \limits_{\overset{v \mod \ell^k}{(v,\ell) = 1}} {\exp\left(\frac{2 \pi i F(v)}{\ell^k} \right)} \right \rvert \le  C_F\ell^{k(1-\frac{1}{d+1})}$$
where $C_F$ is a constant that depends on $F$.
\end{corollary}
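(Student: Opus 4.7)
My plan is to split on whether $F$ is constant modulo $\ell$ and then hand each case to Lemma \ref{Cochrane} (alternatively Lemma \ref{Loh}), possibly after the reduction described in the paragraph preceding the corollary. If $F$ is nonconstant modulo $\ell$, Lemma \ref{Cochrane} applies directly and gives the stated bound with $C_F = 8.82$.

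If instead $F$ is constant modulo $\ell$, then every non-constant coefficient of $F$ is divisible by $\ell$. Letting $c_0$ be the constant term and $t \ge 1$ the largest integer with $\ell^t \mid \mathrm{content}(F - c_0)$, I would write $F(x) = \ell^t G(x) + c_0$, where $G \in \mathbb{Z}[x]$ has no constant term, satisfies $\ell \nmid \mathrm{content}(G)$, and has degree $d$. Since some non-constant coefficient of $G$ is coprime to $\ell$, $G$ is nonconstant modulo $\ell$. For $k \le t$ the exponent $\ell^t G(v)/\ell^k$ is already an integer, so the summand collapses to the constant $e^{2\pi i c_0/\ell^k}$ and the sum is at most $\ell^k \le \ell^t = O_F(1)$. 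For $k > t$, I would apply the factoring identity displayed just above the corollary to rewrite the sum as $\ell^t$ times the exponential sum associated to $G$ modulo $\ell^{k-t}$; Lemma \ref{Cochrane} applied to $G$ then bounds this by $8.82\, \ell^t\, \ell^{(k-t)(1-1/(d+1))} \le C_F \ell^{k(1-1/(d+1))}$, with $\ell^t$ absorbed into $C_F$.

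The only housekeeping is to confirm that every $F$-dependent quantity appearing above is genuinely uniform in $\ell$ and $k$: $\ell^t \le |\mathrm{content}(F - c_0)|$ is $O_F(1)$, $\deg G = d$ is preserved by the reduction, and the lemma constants ($8.82$ from Lemma \ref{Cochrane}, and $d_s - 1$, $R(\ell^{-\tau} F')$, $\tau$ from Lemma \ref{Loh} if one prefers to use it in place of Cochrane) depend only on $F$. I do not foresee any serious obstacle; the corollary is essentially a packaging of Lemmas \ref{Cochrane} and \ref{Loh} together with the content-extraction trick from the discussion preceding it, and the bookkeeping above is the only part that requires any care.
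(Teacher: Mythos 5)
There is a genuine gap in your case analysis, and it stems from conflating two distinct notions: ``$F$ is constant as a function $\mathbb{Z}/\ell\mathbb{Z} \to \mathbb{Z}/\ell\mathbb{Z}$'' versus ``all non-constant coefficients of $F$ are divisible by $\ell$.'' These are not the same. For example, $F(x) = x^\ell - x$ is identically zero modulo $\ell$ by Fermat's little theorem yet has content $1$; likewise $F(x)=x^2+x+1$ is constant modulo $2$ but has non-constant coefficients equal to $1$. So your assertion that ``if $F$ is constant modulo $\ell$ then every non-constant coefficient of $F$ is divisible by $\ell$'' is false, and, for the same reason, so is the later claim ``since some non-constant coefficient of $G$ is coprime to $\ell$, $G$ is nonconstant modulo $\ell$.'' After the content extraction, $G$ can perfectly well be constant modulo $\ell$ (again $G(x)=x^\ell - x$ is an example), and then Lemma~\ref{Cochrane} simply does not apply to $G$.

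This is precisely the situation Lemma~\ref{Loh} exists to handle, so it is not an optional substitute ``if one prefers'' but a logical necessity. The corrected splitting is: reduce to $G$ with no constant term and $\ell\nmid\mathrm{content}(G)$ via the extraction; if $G$ is nonconstant modulo $\ell$, apply Lemma~\ref{Cochrane}; if $G$ is constant modulo $\ell$, then (since a one-term polynomial with a unit leading coefficient cannot be constant mod $\ell$) $G$ has at least two terms, so Lemma~\ref{Loh} applies with $s\ge 2$ and $\ell\nmid\mathrm{content}(G)$, and the resulting constants $d_s-1$, $\tau$, $R(\ell^{-\tau}G')$, $\ell^t$ are all $O_F(1)$. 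Your treatment of the range $k\le t$ and of the uniformity bookkeeping is fine, and once the above correction is made the argument matches the paper's proof.
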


\section{Proof of Theorem \ref{Main Theorem}: the notion of convenient numbers}
 Fix $K\ge 1$ and $\epsilon > 0$. Let $f$ be a polynomially-defined additive function such that $f(p) = F(p)$ for all primes $p$ and $F(x) \in \mathbb{Z}[x]$. We will focus on integers $n \le x$ and varying $q \in \mathcal{S}_f \cap [1,(\log x)^K]$. Similar to Pollack and Singha Roy's paper\cite{PaulAkash}, we will consider the $n$ with many large non-repeated prime factors and we will show that the count of everything else is negligible compared to the count of these special integers. \\
 \\
 We will first define parameters $J = \lfloor \log \log \log x \rfloor$ and $y = \exp ((\log x)^\frac{\epsilon}{2})$. We call a positive integer $n$ \textbf{convenient} if the following conditions hold:
\begin{itemize}
    \item $n \le x$,
    \item the $J$ largest prime factors of $n$ with multiplicity exceed $y$,
    \item none of these $J$ primes are repeated in $n$.
\end{itemize}
To summarize, $n$ is convenient if and only if $n$ admits an expression: $n = mP_1\cdots P_J$, where $P_1,\hdots,P_J$ are primes with
\begin{eqnarray*}
    \max\{P(m),y\}< P_J < \cdots <P_1, & P_1\cdots P_J\le x/m.
\end{eqnarray*}
\begin{lemma}\label{Inconvenient}
The number of inconvenient $n\le x$ is at most $o(x)$.
\end{lemma}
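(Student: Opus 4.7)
The plan is to split the inconvenient integers $n \le x$ into two classes and to bound each class by $o(x)$ using standard analytic machinery.

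First, I verify that every inconvenient $n \le x$ falls into at least one of the following: Class I, consisting of those $n$ with $\Omega_{>y}(n) < J$, where $\Omega_{>y}(n)$ counts the prime factors of $n$ (with multiplicity) exceeding $y$; and Class II, consisting of those $n$ for which some prime $p > y$ satisfies $p^2 \mid n$. The check is elementary: if $\Omega_{>y}(n) \ge J$, then the $J$ largest prime factors of $n$ (listed with multiplicity) all exceed $y$, and the only remaining way for $n$ to fail convenience is that some prime appears at least twice among them, forcing $p^2 \mid n$ for some $p > y$.

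Class II I handle immediately by a union bound over primes:
$$\#\{n \le x : \exists\, p > y, \; p^2 \mid n\} \;\le\; \sum_{p > y} \frac{x}{p^2} \;\ll\; \frac{x}{y} \;=\; o(x),$$
using $y = \exp((\log x)^{\epsilon/2}) \to \infty$.

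For Class I, the natural tool is the Turán--Kubilius inequality applied to the strongly additive function $\Omega_{>y}$. By Mertens' theorem,
$$A(x) \;:=\; \sum_{y < p \le x} \frac{1}{p} \;=\; \log\log x - \log\log y + O(1) \;=\; \left(1 - \tfrac{\epsilon}{2}\right)\log\log x + O(1),$$
and Turán--Kubilius yields $\sum_{n \le x}(\Omega_{>y}(n) - A(x))^2 \ll x \cdot A(x)$. Since $J = \lfloor \log\log\log x \rfloor = o(A(x))$, one has $A(x) - J \sim A(x)$, and Chebyshev's inequality then gives
$$\#\{n \le x : \Omega_{>y}(n) < J\} \;\le\; \frac{x \cdot A(x)}{(A(x) - J)^2} \;\ll\; \frac{x}{A(x)} \;=\; o(x).$$

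None of the individual steps is difficult; the only point needing mild care is the parameter matching. The choice $y = \exp((\log x)^{\epsilon/2})$ is made so that $\log y$ is small relative to $\log x$ (ensuring $A(x) \asymp \log\log x$), while $J = \lfloor \log\log\log x \rfloor$ is much smaller than this mean (so that the Chebyshev bound is effective). Given these choices the argument is essentially forced, and I do not anticipate a serious obstacle; the main bookkeeping is simply writing out the decomposition into Classes I and II cleanly.
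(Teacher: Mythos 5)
Your proof is correct, but it takes a genuinely different route from the paper. The paper removes $z$-smooth numbers ($z = x^{1/\log\log x}$) via de Bruijn-type estimates, removes $n$ with a repeated prime factor exceeding $y$, and then decomposes the remaining inconvenient $n$ as $n = PAB$ with $P = P(n) > z$, $A$ the squarefree part above $y$ satisfying $\Omega(A) < J$, and $B$ the $y$-smooth part; it then bounds the count by summing $\pi(x/AB)$ over $A$ and $B$. You instead note that an inconvenient $n$ must have either $\Omega_{>y}(n) < J$ (Class I, handled by Tur\'an--Kubilius plus Chebyshev, using that $J = o(\log\log x - \log\log y)$) or a repeated prime above $y$ (Class II, handled by $\sum_{p>y} x/p^2 \ll x/y$). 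Your route is shorter and more elementary for this particular lemma, and avoids any appeal to smooth-number counts. The paper's route is chosen because the $n = PAB$ decomposition is reused essentially verbatim in Lemma~\ref{Inconvenient congruence}, where the additional congruence condition $f(n)\equiv a \pmod q$ must be imposed on $P$ and the Brun--Titchmarsh bound substituted for $\pi(x/AB)$; the Tur\'an--Kubilius approach does not carry over to that weighted setting. Two small remarks: $\Omega_{>y}$ is completely (not strongly) additive since $\Omega_{>y}(p^k) = k$ for $p > y$, though Tur\'an--Kubilius still applies --- or you could simply replace it by the strongly additive $\omega_{>y}$ and use $\Omega_{>y}(n) < J \Rightarrow \omega_{>y}(n) < J$; and your Class~I set as literally defined overlaps Class~II (an $n$ can fail both conditions), but since you only need a covering, this is harmless.
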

\begin{proof}
We follow the proof in \cite[Lemma 3.1]{PaulAkash}. Suppose $n \le x$ is inconvenient. We can assume $P(n) > z = x^{1/\log \log x}$ ($P(n)$ is the largest prime factor of $n$) because the number of exceptional $n \le x$, by theorems on smooth numbers \cite[Theorem 5.13 and Corollary
5.19, Chapter III.5]{Smooth}, is $\frac{x}{(\log x)^{(1+o(1))\log \log \log x}} = o(x)$. We can also assume that $n$ has no repeated prime factors exceeding $y$ because the number of exceptional $n \le x$ is $O(x/y) = o(x)$.\\
\\
We will denote $n = PAB$ where $P = P(n)$ and $A$ is the largest divisor of $n/P$ supported on primes exceeding $y$. Then $z < P \le x/AB$. If $A$ and $B$ are given, the number of possibilities of $P$ is bounded by $\pi(x/AB) \ll \frac{x}{AB\log z} \ll \frac{x \log \log x}{AB \log x}$. We sum on $A$ and $B$ to bound the number of inconvenient $n$. As $n$ has no repeated primes exceeding $y$ and $n$ is inconvenient, $\Omega(A) < J$. Thus $\sum \frac{1}{A} \le \left(1+\sum \limits_{p \le x} \frac{1}{p} \right)^J \le (2 \log \log x)^J \le \exp\left(O((\log \log \log x)^2) \right)$. Since $B$ is $y$-smooth, $\sum \frac{1}{B} = \prod \limits_{p \le y} \left(\sum \limits_{j=0}^\infty \frac{1}{p^j} \right)\ll \exp \left(\sum \limits_{p \le y} \frac{1}{p} \right) \ll \log y= (\log x)^\frac{\epsilon}{2}$. Combining all the results, the number of the remaining inconvenient $n \le x$ is at most 
\begin{align*}
 \hfill   &\hfill & \frac{x \log \log x}{(\log x)^{1-\frac{\epsilon}{2}}}\exp(O((\log \log \log x)^2)) = o(x).&\hfill & \hfill&  \qedhere\\
\end{align*}
\end{proof}
\noindent
Next, we show that the count of inconvenient integers $n\le x$ with the condition $f(n) \equiv a \Mod{q}$, for polynomially-defined additive function $f$, is negligible compared to the count of convenient integers with the same condition:
\begin{lemma}\label{Inconvenient congruence}
Let $f$ be a polynomially-defined function, such that $f(p) = F(p)$ for all primes and $F(x)$ is a polynomial in $\mathbb{Z}[x]$ with degree $d$. If $q$ satisfies the conditions in Theorem \ref{Main Theorem}, the number of inconvenient $n \le x$ where $f(n) \equiv a \Mod{q}$ is $o(x/q)$ for all residue classes $a \mod q$.
\end{lemma}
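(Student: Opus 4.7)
The plan is to mirror the three-way partition of inconvenient $n \le x$ used in the proof of Lemma \ref{Inconvenient}: those $n$ that are $z$-smooth (with $z = x^{1/\log\log x}$); those with a repeated prime factor exceeding $y$; and those with $P(n) > z$, no repeated prime factor above $y$, but fewer than $J$ prime factors above $y$ in total. The first two classes will be handled without using the congruence, while the third is where $f(n) \equiv a \Mod{q}$ must be exploited to gain the extra factor of roughly $1/q$.

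For $z$-smooth $n$, the standard smooth-number estimate gives $x/(\log x)^{(1 + o(1)) \log\log\log x}$, which is $o(x/q)$ for any $q \le (\log x)^K$ since $\log\log\log x \to \infty$. For $n$ with a repeated prime $p > y$, we have $\sum_{p > y} x/p^2 \ll x/y = x \exp(-(\log x)^{\epsilon/2})$, which is super-polynomially smaller than $x/q$. So these two cases already meet the $o(x/q)$ target.

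The third class is the crux. Writing $n = PAB$ as in the proof of Lemma \ref{Inconvenient}---with $P = P(n) > z$, $A$ the largest divisor of $n/P$ supported on primes $> y$ (having $\Omega(A) \le J - 2$ distinct primes), and $B$ the $y$-smooth part---the coprimality $\gcd(P, AB) = 1$ turns $f(n) \equiv a \Mod{q}$ into $F(P) \equiv w \Mod{q}$ with $w = w(A,B) := a - f(A) - f(B)$. For fixed $A, B$, the prime $P \le x/(AB)$ must lie in at most
\[
\rho_F^{\ast}(q;w) := \#\{v \bmod q : (v,q)=1,\ F(v) \equiv w \Mod{q}\}
\]
reduced residue classes modulo $q$. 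The Brun--Titchmarsh inequality, together with $AB \le x/z$ so that $\log(x/(ABq)) \gg \log x / \log\log x$, bounds the count of admissible primes by $\ll \rho_F^{\ast}(q;w) \cdot x\log\log x / (AB\, \varphi(q)\, \log x)$. I would then invoke the Konyagin-type bound $\rho_F^{\ast}(q;w) \ll_F q^{1 - 1/d}$ uniformly in $w$: for $d = 1$ this is trivial, while for $d \ge 2$ it follows from classical estimates on solutions of polynomial congruences modulo prime powers combined with the Chinese Remainder Theorem. Summing over $A, B$ via $\sum 1/A \le \exp(O((\log\log\log x)^2))$ and $\sum 1/B \ll \log y = (\log x)^{\epsilon/2}$ from the proof of Lemma \ref{Inconvenient}, and using $\varphi(q) \gg q^{1 - o(1)}$, the third class contributes at most
\[
\frac{x}{q} \cdot q^{1 - 1/d} \cdot (\log x)^{-1 + \epsilon/2 + o(1)}.
\]

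For $d = 1$ the factor $q^{1-1/d}$ equals $1$, so this is $(x/q)(\log x)^{-1 + \epsilon/2 + o(1)} = o(x/q)$ whenever $\epsilon \le 1$. For $d \ge 2$ the hypothesis $q \le (\log x)^{(1-\epsilon)(1 - 1/d)^{-1}}$ yields $q^{1 - 1/d} \le (\log x)^{1 - \epsilon}$, so the product becomes $(x/q)(\log x)^{-\epsilon/2 + o(1)} = o(x/q)$, as required. The main obstacle is securing the sharp exponent $1 - 1/d$ in the bound on $\rho_F^{\ast}$: the weaker exponent $1 - 1/(d+1)$ obtained by naively inserting Corollary \ref{Corollary} into the Fourier expansion of $\rho_F^{\ast}$ would not cover the full range of $q$ allowed in Theorem \ref{Main Theorem}, and indeed the sharper exponent is precisely what the near-tight example following Theorem \ref{Main Theorem} forces.
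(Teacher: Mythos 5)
Your proof is correct and follows essentially the same route as the paper's: decompose inconvenient $n$ as $n = PAB$ with $P = P(n) > z$, apply Brun--Titchmarsh to $P$ using the bound $\xi(q) \ll_F q^{1-1/d}$ on the number of admissible residue classes $v \Mod{q}$ with $F(v) \equiv w \Mod{q}$, sum over $A$ and $B$ exactly as in Lemma \ref{Inconvenient}, and then split on $d=1$ versus $d\ge 2$. Two small remarks: the paper cites Konyagin directly for the uniform bound over composite moduli, whereas the naive CRT patching of prime-power estimates you invoke actually incurs an extra factor $C_F^{\omega(q)} = q^{o(1)}$ (which would still suffice here, so this is a citation imprecision rather than a gap); and the paper also treats squarefree $q$ separately via $\xi(q) \ll_F d^{\omega(q)}$, a case distinction your streamlined argument correctly recognizes as unnecessary under the Theorem's stated range of $q$.
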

\begin{proof}
We follow the the verification of Hypothesis B in section 5 of \cite{PaulAkash}. Suppose $n \le x$ is inconvenient and $f(n) \equiv a\Mod{q}$. From the proof in Lemma \ref{Inconvenient}, we can assume $P(n) > z = x^{1/\log \log x}$ and that $n$ has no repeated prime factors exceeding $y$ because the number of exceptional $n \le x$ is $o(x/q)$. \\
\\
We will denote $n = PAB$ where $P = P(n)$ and $A$ is the largest divisor of $n/P$ supported on primes exceeding $y$. Then $z < P \le x/AB$ and $f(P) + f(AB)\equiv a \Mod{q}$. Let $\xi(q)$ be the maximum number of roots $v \mod{q}$ of any congruence $F(v) \equiv a' \Mod{q}$, where the maximum is over all residue classes $a'\mod{q}$. By the Brun–Titchmarsh inequality, given $A$ and $B$ there are $\ll \xi(q)\frac{x}{\varphi(q)AB \log(z/q)} \ll \xi(q)\frac{x \log \log x}{\varphi(q)AB \log x} $ corresponding values of $P$. Summing over $A$ and $B$, using the same process as Lemma \ref{Inconvenient}, the number of remaining inconvenient $n \le x$ such that $f(n) \equiv a\Mod{q}$ is at most
$$\xi(q)\frac{x \log \log x}{\varphi(q)(\log x)^{1-\frac{\epsilon}{2}}}\exp(O((\log \log \log x)^2)).$$
Since $\frac{1}{\log \log 3q} \ll \frac{\varphi(q)}{q} \ll 1 $ for all $q \ge 2$, in our range of $q$ the ratio $\frac{\varphi(q)}{q}$ is negligible compared to any power of $\log x$. Hence, it is enough to show $\xi(q) = o\left((\log x)^{1-\frac{3}{4}\epsilon} \right)$.\\
\\
When $d = 1$, $\xi(q) = O_F(1)$. (In fact, it is bounded above by the absolute value of leading coefficient of $F$.) Hence, let $d \ge 2$. We will first consider when $q$ is prime. Then, $\xi(q)$ is at most $d$ expect for finitely many primes. (If $F$ does not reduce to a constant modulo $q$, $\xi(q)$ is at most $d$). Extending to when $q$ is squarefree, $\xi(q) = O_F(d^{\omega(q)})$. Using the asymptotic formula $\omega(q) \ll \frac{\log q }{\log \log q}$ from \cite{Hardy}, in our range of $q$, $d^{\omega(q)} = \exp\left(O_F\left(\frac{\log \log x \log d}{\log \log \log x} \right)\right)$ which is smaller than the bound we want. Otherwise, by a result of Konyagin in \cite{Konyagin1}, each congruence $F(v) \equiv a' \Mod{q}$ has $O_F(q^{1-1/d})$ roots modulo $q$ for any residue class $a'\mod{q}$. Using the condition in Theorem \ref{Main Theorem}, $q \le (\log x)^{\left(1-\epsilon\right)\left(1-1/d\right)^{-1}}$ will satisfy our bound.
\end{proof}
\noindent
Therefore, by Lemmas \ref{Inconvenient} and \ref{Inconvenient congruence}, it suffices to show
\begin{equation}\label{eq: 1}
\sum \limits_{\overset{\text{convenient $n \le x$}}{f(n) \equiv a \Mod{q}}}1 \sim \frac{1}{q}\sum \limits_{\text{convenient $n \le x$}}1. 
\end{equation}
We will now rewrite the number of ways we can count the number of convenient numbers as follows:
\begin{equation}\label{eq: convenient}
\sum \limits_{\text{convenient $n \le x$}}1 = \frac{1}{J!}\sum \limits_{m \le x} \sum \limits_{\overset{P_1,\hdots,P_J \text{ distinct}}{\overset{P_1 \cdots P_J \le x/m}{{\text{each } P_J > L_m}}}}1
\end{equation}
where $L_m = \max\{y,P(m)\}$. We will define for an arbitrary class $w \mod q$
$$\mathcal{V}(w) = \{(v_1,\cdots,v_J) \mod{q}: (v_1,\hdots,v_J,q) = 1,\sum \limits_{j=1}^JF(v_j) \equiv w \Mod{q} \}$$
so we can write
\begin{equation}\label{eq: 2}
\sum \limits_{\overset{\text{convenient $n \le x$}}{f(n) \equiv a \Mod{q}}}1 = \frac{1}{J!}\sum \limits_{m \le x} \sum \limits_{v \in \mathcal{V}_m} \sum \limits_{\overset{P_1,\hdots,P_J \text{ distinct}}{\overset{P_1 \cdots P_J \le x/m}{\overset{\text{each } P_J > L_m}{\text{each }P_j \equiv v_j \Mod{q}}}}}1    
\end{equation}
where $L_m = \max\{y,P(m)\}$ and $\mathcal{V}_m = \mathcal{V}(a-f(m))$. \\
\\
We will proceed as in \cite{PaulAkash} to remove the congruence in the innermost sum over the $J$ largest primes in \eqref{eq: 2}. Write
$$\sum \limits_{\overset{P_1,\hdots,P_J \text{ distinct}}{\overset{P_1 \cdots P_J \le x/m}{\overset{\text{each } P_j > L_m}{\text{each }P_j \equiv v_j \Mod{q}}}}}1 = \sum \limits_{\overset{P_2,\hdots,P_J \text{ distinct}}{\overset{P_2 \cdots P_J \le x/mL_m}{\overset{\text{each } P_j > L_m}{\text{each }P_j \equiv v_j \Mod{q}}}}}\sum \limits_{\overset{P_1 \ne P_2,\hdots,P_J \text{ distinct}}{\overset{L_m <P_1 \le x/(mP_2\cdots P_J)}{P_1 \equiv v_1 \Mod{q}}}}1.$$
Since $L_m \ge y$ and $q \le (\log x)^K = (\log y)^{2K}$, the Siegel-Walfisz theorem implies that
$$\sum \limits_{\overset{P_1 \ne P_2,\hdots,P_J \text{ distinct}}{\overset{L_m <P_1 \le x/(mP_2\cdots P_J)}{P_1 \equiv v_1 \Mod{q}}}}1 = \frac{1}{\varphi(q)}\sum \limits_{\overset{P_1 \ne P_2,\hdots,P_J \text{ distinct}}{{L_m <P_1 \le x/(mP_2\cdots P_J)}}}1 +O\left(\frac{x}{mP_2\cdots P_J} \exp(-C_K \sqrt{\log y} ) \right).$$
Thus,
$$\sum \limits_{\overset{P_1,\hdots,P_J \text{ distinct}}{\overset{P_1 \cdots P_J \le x/m}{\overset{\text{each } P_j > L_m}{\text{each }P_j \equiv v_j \Mod{q}}}}}1 = \frac{1}{\varphi(q)}\sum \limits_{\overset{P_1,\hdots,P_J \text{ distinct}}{\overset{P_1 \cdots P_J \le x/m}{\overset{\text{each } P_j > L_m}{\text{$(\forall j \ge 2)$ }P_j \equiv v_j \Mod{q}}}}}1 + O\left(\frac{x}{m}\exp\left(-\frac{1}{2}C_K(\log x)^{\epsilon/4} \right) \right).$$
Repeating the same steps for $P_2,\hdots,P_J$, we have
$$\sum \limits_{\overset{P_1,\hdots,P_J \text{ distinct}}{\overset{P_1 \cdots P_J \le x/m}{\overset{\text{each } P_j > L_m}{\text{each }P_j \equiv v_j \Mod{q}}}}}1 = \frac{1}{\varphi(q)^J}\sum \limits_{\overset{P_1,\hdots,P_J \text{ distinct}}{\overset{P_1 \cdots P_J \le x/m}{{\text{each } P_j > L_m}}}}1 + O\left(\frac{x}{m}\exp\left(-\frac{1}{4}C_K(\log x)^{\epsilon/4} \right) \right).$$
Putting this back into \eqref{eq: 2} and noting that $\#\mathcal{V}_m \le (\log x)^{KJ}$,
\begin{equation}\label{eq: 3}
\sum \limits_{\overset{\text{convenient $n \le x$}}{f(n) \equiv a \Mod{q}}}1 = \sum \limits_{m \le x} \frac{\#\mathcal{V}_m}{\varphi(q)^J} \left( \frac{1}{J!}\sum \limits_{\overset{P_1,\hdots,P_J \text{ distinct}}{\overset{P_1 \cdots P_J \le x/m}{{\text{each } P_j > L_m}}}}1 \right) + O\left(x \exp\left( -\frac{1}{8}C_K (\log x)^{\epsilon/4}\right)\right).
\end{equation}
We will find an asymptotic formula for $\#\mathcal{V}_m$ using exponential sums. We can write $\#\mathcal{V}_m = \#\mathcal{V}(w) = \prod \limits_{\ell^e \mid \mid q}\#V_{\ell^e}$ for each prime power $\ell^e \mid \mid q$ where $V_{\ell^e}$ can be written as follows:
$$V_{\ell^e} = \{(v_1,\cdots,v_J) \mod{\ell^e}: (v_1,\hdots,v_J,\ell) = 1,\sum \limits_{j=1}^JF(v_j) \equiv w \Mod{\ell^e} \}.$$
By the orthogonality relations for exponential sums,
$$\ell^e \#V_{\ell^e} = \sum \limits_{r \mod \ell^e}\sum \limits_{\overset{v_1,\hdots,v_J}{(v_1\cdots v_J,\ell)=1}}\exp\left( \frac{2\pi i r\left[\sum \limits_{j=1}^J F(v_j) -w \right]}{\ell^e}\right)$$
$$=\varphi(\ell^e)^J + \sum \limits_{0 < r < \ell^e}\exp\left(\frac{-2\pi irw}{\ell^e}\right) \left[\sum \limits_{\overset{v \mod \ell^e}{(v,\ell) = 1}}\exp\left(\frac{2 \pi i rF(v)}{\ell^e} \right)\right]^J$$
$$=\varphi(\ell^e)^J+\sum \limits_{k=1}^e (\ell^{J})^{e-k}\left( \sum \limits_{\overset{0<r<\ell^k}{\ell \nmid r}}\exp\left(\frac{-2\pi irw}{\ell^k}\right) \left[\sum \limits_{\overset{v \mod \ell^k}{(v,\ell) = 1}}\exp\left(\frac{2 \pi i rF(v)}{\ell^k} \right)\right]^J\right).$$
We can rewrite this as $\varphi(\ell^e)^J\left[1+\sum \limits_{k=1}^e H(\ell,k,J) \right]$, where 
$$H(\ell,k,J) = \sum \limits_{\overset{0<r<\ell^k}{\ell \nmid r}}\exp\left(\frac{-2\pi irw}{\ell^k}\right) \left[\sum \limits_{\overset{v \mod \ell^k}{(v,\ell) = 1}} \frac{\exp\left(\frac{2 \pi i r F(v)}{\ell^k} \right)}{\varphi(\ell^k)}\right]^J.$$
Let $\ell$ be a prime dividing $q$ with $e = e(\ell,q)$ satisfying $\ell^e \mid \mid q$. Define $S_\ell = \mathbb{N} \cap [1,e]$ and three subsets $X_\ell$, $Y_\ell$, and $Z_\ell$ such that
\begin{itemize}
    \item $X_\ell = \left\{k \in S_\ell: \left \lvert\sum \limits_{\overset{v \mod \ell^k}{(v,\ell) = 1}} \exp\left(\frac{2 \pi i rF(v)}{\ell^k} \right) \right \rvert = {\varphi(\ell^k)} \right\}$,
    \item $Y_\ell = \{k \in S_\ell\backslash X_\ell:  \frac{C_F\ell^{k(1-\frac{1}{d+1})}}{\varphi(\ell^k)} < 1\}$ where $C_F$ is the constant in Corollary \ref{Corollary}, and
    \item $Z_\ell = \{k \in S_\ell\backslash(X_\ell \cup Y_\ell)\}$.
\end{itemize}
We will show that for each $\ell$, the sums over $Y_\ell$ and $Z_\ell$ are vanishing as $J \rightarrow \infty$, that $X_\ell$ and $Z_\ell$ are empty for sufficiently large $\ell$, and that the asymptotic bounds on $\#\mathcal{V}(w)$ come from $X_\ell$ for small $\ell$.\\
\\
Define $M = M(F,q)$ to be the largest positive integer dividing $q$ such that $F(v) \equiv F(1) \Mod{M}$ for all $v$ coprime to $M$. $M$ is well defined and $M \ll_F 1$ because $F$ is a nonconstant polynomial. Let $D_\ell$ be the largest positive integer such that $\ell^{D_\ell} \mid M$.
\begin{lemma}\label{X_ell}
A closed form of the sum over $X_\ell$ is as follows:
$$\sum \limits_{k \in X_\ell} H(\ell,k,J) = \begin{cases} 0 & \text{if $\ell \nmid M$},\\ \ell^{D_\ell}-1 & \text{if $\ell \mid M$ and $F(1)J \equiv w \Mod{\ell^{D_\ell}}$},\\ -1 & \text{otherwise}. \end{cases}$$
\end{lemma}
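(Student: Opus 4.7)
The plan is to first identify the set $X_\ell$ concretely, then collapse $H(\ell,k,J)$ into a Ramanujan sum, and finally telescope a short geometric-type sum. I begin by arguing that $X_\ell$ is precisely $\{1,2,\ldots,D_\ell\}$. Indeed, the equality $\bigl|\sum_{v}\exp(2\pi i r F(v)/\ell^k)\bigr|=\varphi(\ell^k)$ (for all $r$ coprime to $\ell$) forces all summands to have the same argument, which happens exactly when $F(v)\equiv F(1)\pmod{\ell^k}$ for every $v$ coprime to $\ell$. A short argument using the Chinese remainder theorem shows that this condition on $\ell^k$ is equivalent to $\ell^k \mid M$, i.e. $k\le D_\ell$. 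In particular, when $\ell\nmid M$ the set $X_\ell$ is empty and the lemma's first case is immediate.

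For $k\in X_\ell$ the inner sum collapses to $\varphi(\ell^k)\exp(2\pi i r F(1)/\ell^k)$, so the factor of $\varphi(\ell^k)^{-J}$ inside $H(\ell,k,J)$ cancels exactly with the $J$-th power, leaving
\begin{equation*}
H(\ell,k,J)=\sum_{\substack{0<r<\ell^k\\ \ell\nmid r}}\exp\!\left(\frac{2\pi i r\,(F(1)J-w)}{\ell^k}\right)=c_{\ell^k}(F(1)J-w),
\end{equation*}
the Ramanujan sum at $a:=F(1)J-w$. I then invoke the standard formula: writing $j=v_\ell(a)$ (with $j=\infty$ if $a\equiv 0\bmod \ell^{D_\ell}$ in the relevant range), $c_{\ell^k}(a)$ equals $\varphi(\ell^k)$ when $k\le j$, equals $-\ell^{k-1}$ when $k=j+1$, and vanishes when $k\ge j+2$.

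It remains to sum $\sum_{k=1}^{D_\ell} c_{\ell^k}(a)$. If $F(1)J\equiv w\pmod{\ell^{D_\ell}}$, every term is $\varphi(\ell^k)$, so the telescoping identity $\sum_{k=1}^{D_\ell}\varphi(\ell^k)=\ell^{D_\ell}-1$ gives the second case of the lemma. Otherwise $j<D_\ell$, and the sum splits as $\sum_{k=1}^{j}\varphi(\ell^k)-\ell^{j}=(\ell^j-1)-\ell^j=-1$, yielding the third case (the computation remains valid when $j=0$, interpreting the empty sum as $0$). Combining with the $\ell\nmid M$ case handled in the first step completes the proof.

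The main thing to get right — and the only step that is not a routine computation — is the characterization of $X_\ell$ via $D_\ell$; in particular, one must check that the condition defining $M$ (congruence for $v$ coprime to $M$) is equivalent, at each prime power, to the pointwise condition on $(\mathbb{Z}/\ell^k\mathbb{Z})^\ast$ that forces maximality of the exponential sum. Once this equivalence is in hand, everything else reduces to the Ramanujan sum formula.
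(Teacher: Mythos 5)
Your proof is correct and follows essentially the same route as the paper's: you identify $X_\ell = \{1,\ldots,D_\ell\}$ from the definition of $M$, collapse each $H(\ell,k,J)$ to the exponential sum $\sum_{r} \exp(2\pi i r(F(1)J-w)/\ell^k)$ over $r$ coprime to $\ell^k$, and then evaluate. The only difference is cosmetic: you invoke the closed-form Ramanujan-sum formula for $c_{\ell^k}(F(1)J-w)$ and case-split on $v_\ell(F(1)J-w)$, whereas the paper writes each coprime sum as a difference of two complete character sums and telescopes — the two computations are interchangeable.
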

\begin{proof}
If $k \in X_\ell$, $\exp\left(\frac{2 \pi i rF(v)}{\ell^k} \right) = \exp\left(\frac{2 \pi i rF(1)}{\ell^k} \right)$ for all $v$ coprime to $\ell$. Thus, $F(v) \equiv F(1) \Mod{\ell^k}$ and 
$\ell^k \mid M$. It follows that $X_\ell = \{ k \in \mathbb{N}: k \le D_\ell \}$; in particular, $X_\ell = \emptyset$ if $D_\ell = 0$. This gives the evaluation of the sum when $\ell \nmid M$. Otherwise,
$$\sum \limits_{k\in X_\ell}H(\ell,k,J) = \sum \limits_{k=1}^{D_\ell} \sum \limits_{\overset{0<r<\ell^k}{\ell \nmid r}}\exp\left(\frac{-2\pi ir(w - F(1) J)}{\ell^k}\right)  $$
$$= \sum \limits_{k=1}^{D_\ell}\left[\sum \limits_{r \mod{\ell^k}}\exp\left(\frac{-2\pi ir(w - F(1) J)}{\ell^k}\right) - \sum \limits_{r \mod{\ell^{k-1}}}\exp\left(\frac{-2\pi ir(w - F(1) J)}{\ell^{k-1}}\right) \right]$$
$$=-1+\left[\sum \limits_{r \mod{\ell^{D_\ell}}}\exp\left(\frac{-2\pi ir(w - F(1) J)}{\ell^{D_\ell}}\right)\right] .$$
By the orthogonality relation for additive characters, the sum on $r$ is $\ell^{D_\ell}$ if $F(1)J \equiv w\Mod{\ell^{D_\ell}}$ and 0 otherwise.
\end{proof}
\noindent
To study the sum over $Y_\ell$ and $Z_\ell$, we consider $B(\ell,k)= \frac{C_F\ell^{k(1-\frac{1}{d+1})}}{\varphi(\ell^k)}$ as a continuous function on the domain $\{(\ell,k):\text{$\ell \ge 2$ and $k \ge 1$}\}$. It is easy to see that $B(\ell,k)$ is a monotonically decreasing function of both $\ell$ and $k$. Thus, for every fixed $\ell$, $B(\ell,k) < 1$ for all large $k$. We record the following consequences:
\begin{itemize}
    \item For every fixed $\ell$, there exist a smallest positive integer $E_\ell$ not in $X_\ell$ such that $B(\ell,E_\ell) < 1$. Then $Y_\ell = \{k \in \mathbb{N}: E_\ell \le k \le e\}$.
    \item There exists a value of $k$, bounded in terms of $F$, with $B(\ell, k) < 1$ for all $\ell \ge 2$. Thus, $E_\ell \ll_F 1$.
    \item For large $\ell$, we have $B(\ell ,1) < 1$. Since $X_\ell$ is empty when $\ell > M$, $E_\ell = 1$ for all large $\ell$. For these $\ell$, we have $Y_\ell = S_\ell$ and $Z_\ell = \emptyset$. We fix $\ell_0 = \ell_0(F)$ such that $Y_\ell = S_\ell$ and $2C_F \le \ell^{\frac{1}{2(d+1)}}$ for all $\ell > \ell_0$.
\end{itemize}
If $k \not \in X_\ell$, $\left \lvert \sum \limits_{\overset{v \mod \ell^k}{(v,\ell) = 1}} \frac{\exp\left(\frac{2 \pi i r F(v)}{\ell^k} \right)}{\varphi(\ell^k)}\right \rvert  < 1 $. So we may find a positive $\eta \gg_F 1$, satisfying for all $\ell \le \ell_0$,
$$(1-\eta) \ge \begin{cases}
    B(\ell,k) & \text{if $k \ge E_\ell$},\\
    \left \lvert\sum \limits_{\overset{v \mod \ell^{k}}{(v,\ell) = 1}}\frac{\exp\left(\frac{2 \pi i r F(v)}{\ell^{k}} \right)}{\varphi(\ell^{k})}\right \rvert & \text{if $D_\ell < k < E_\ell$}.
\end{cases}$$
\begin{lemma}\label{Y_ell}
    As $x \rightarrow \infty$ (which coincides with $J \rightarrow \infty$),
    $$\sum \limits_{\ell \mid q} \left \lvert\sum \limits_{k \in Y_\ell} H(\ell,k,J) \right \rvert = o(1).$$
\end{lemma}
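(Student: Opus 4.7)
My plan is to bound $|H(\ell, k, J)|$ by a quantity with geometric decay in $J$ uniform in $k$, and then sum over $k \in Y_\ell$ and $\ell \mid q$ in two regimes ($\ell \le \ell_0$ and $\ell > \ell_0$). The decisive preliminary observation is the algebraic identity
$$\varphi(\ell^k)\, B(\ell, k)^{d + 1} = \frac{C_F^{d + 1}\, \ell^d}{(\ell - 1)^d} \le 2^d C_F^{d + 1},$$
which follows by direct substitution from $\varphi(\ell^k) = \ell^{k - 1}(\ell - 1)$. Feeding Corollary \ref{Corollary} into the bracket in $H(\ell, k, J)$ and using the trivial bound $\varphi(\ell^k)$ on the outer $r$-sum, for every $k \in Y_\ell$ one obtains
$$|H(\ell, k, J)| \le \varphi(\ell^k)\, B(\ell, k)^J = \bigl[\varphi(\ell^k)\, B(\ell, k)^{d + 1}\bigr] \cdot B(\ell, k)^{J - d - 1} \ll_F B(\ell, k)^{J - d - 1}.$$
Since $B(\ell, k) < 1$ throughout $Y_\ell$, this is genuine geometric decay in $J$.

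For $\ell \le \ell_0$, membership in $Y_\ell$ forces $k \ge E_\ell$, and the choice of $\eta$ yields $B(\ell, k) \le 1 - \eta$. A direct computation shows that the consecutive ratio $B(\ell, k + 1)/B(\ell, k) = \ell^{-1/(d + 1)}$ is bounded away from $1$, so $\sum_{k \ge E_\ell} B(\ell, k)^{J - d - 1}$ is a geometric series of size $O_F((1 - \eta)^{J - d - 1}) = O_F((1 - \eta)^J)$. Since only $O_F(1)$ primes satisfy $\ell \le \ell_0$, the aggregate contribution from small primes is $O_F((1 - \eta)^J) = O_F\bigl((\log\log x)^{-\eta + o(1)}\bigr) = o(1)$.

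For $\ell > \ell_0$, the defining inequality $2 C_F \le \ell^{1/(2(d + 1))}$ together with $\varphi(\ell^k) \ge \ell^k/2$ upgrades the bound to $B(\ell, k) \le \ell^{-(2k - 1)/(2(d + 1))}$. Plugging this into the estimate for $|H(\ell, k, J)|$ and summing the resulting geometric series in $k \ge 1$ (which converges easily once $J$ exceeds a constant depending only on $d$) yields
$$\sum_{k \in Y_\ell} |H(\ell, k, J)| \ll_F \ell^{-(J - d - 1)/(2(d + 1))}.$$
Summing over primes $\ell > \ell_0$ dividing $q$ is then dominated by $\sum_{n \ge 2} n^{-(J - d - 1)/(2(d + 1))}$, whose value is $O\bigl(2^{-(J - d - 1)/(2(d + 1))}\bigr) = O\bigl((\log\log x)^{-c}\bigr)$ for some $c = c(d) > 0$, and hence $o(1)$.

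The only conceptual obstacle is the first step. The naive estimate $\varphi(\ell^k)\, B(\ell, k)^J$ carries an unavoidable prefactor $\varphi(\ell^k)$ that could accumulate to size on the order of $q \le (\log x)^K$ when summed over $k$, which is far too large to be absorbed by the decay $(1 - \eta)^J = (\log\log x)^{-\eta}$ coming from $J = \lfloor \log\log\log x \rfloor$. The trick is to absorb this prefactor using $d + 1$ copies of $B(\ell, k)$, exploiting the denominator $\varphi(\ell^k)^{d + 1}$ hidden inside $B(\ell, k)^{d + 1}$; this costs only the harmless shift from $J$ to $J - d - 1$ in the effective rate of decay. Everything after that is a routine mixture of geometric series and $p$-series estimates.
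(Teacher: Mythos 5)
Your proof is correct and takes essentially the same approach as the paper: bound $|H(\ell,k,J)|\le\varphi(\ell^k)B(\ell,k)^J$ via Corollary~\ref{Corollary}, sum over $k\ge E_\ell$ using geometric decay in $k$, and split into $\ell\le\ell_0$ (finitely many primes, controlled by $(1-\eta)^J$) versus $\ell>\ell_0$ (controlled by negative powers of $\ell$ giving a convergent sum). The only difference is cosmetic: you tame the $\varphi(\ell^k)$ prefactor by the identity $\varphi(\ell^k)B(\ell,k)^{d+1}=C_F^{d+1}\ell^d/(\ell-1)^d\ll_F1$, whereas the paper notes that $\sum_k\varphi(\ell^k)B(\ell,k)^J$ is itself geometric with ratio $\ell^{1-J/(d+1)}\le\tfrac12$ for large $J$ and hence dominated by its first term.
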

\begin{proof}
Using Corollary \ref{Corollary}, as $J \rightarrow \infty$,
$$\left \lvert \sum \limits_{k \in Y_\ell}H(\ell,k,J) \right \rvert \le \sum \limits_{k = E_\ell}^\infty \left[\frac{C_F\ell^{k(1-\frac{1}{d+1})}}{\varphi(\ell^k)}\right]^J \varphi(\ell^k) \ll \left[\frac{C_F\ell^{E_\ell(1-\frac{1}{d+1})}}{\varphi(\ell^{E_\ell})}\right]^J\ell^{E_\ell}\le \left[\frac{C_F\ell^{(1-\frac{1}{d+1})}}{\ell - 1}\right]^J\ell^{E_\ell}.$$
We used the fact that $\sum \limits_{k = E_\ell}^\infty \left[\frac{C_F\ell^{k(1-\frac{1}{d+1})}}{\varphi(\ell^k)}\right]^J \varphi(\ell^k)$ is a geometric series with ratio $\ell^{(1- \frac{J}{d+1})}$. As $J$ gets large, $\ell^{(1- \frac{J}{d+1})} \le 2^{(1- \frac{J}{d+1})} \le \frac{1}{2}$ so the sum of the geometric series is at most twice the first term.\\
\\
If we assume that $\ell > \ell_0$, then $E_\ell = 1$ and $\frac{C_F\ell^{(1-\frac{1}{d+1})}}{\ell - 1} \le \ell^{-\frac{1}{2(d+1)}}$ so that
\begin{equation}\label{eq: Y_ell}
\left \lvert \sum \limits_{k \in Y_\ell}H(\ell,k,J) \right \rvert = O\left( \ell^{1 - \frac{J}{2(d+1)}}\right) =  O(\ell^{-\frac{J}{4(d+1)}}).
\end{equation}
For the exceptional $\ell \le \ell_0$, as $J \rightarrow \infty$, 
\begin{equation}\label{eq: Y_ell part 2}
  \left \lvert \sum \limits_{k \in Y_\ell}H(\ell,k,J) \right \rvert \ll \left[\frac{C_F\ell^{E_\ell(1-\frac{1}{d+1})}}{\varphi(\ell^{E_\ell})}\right]^J\ell^{E_\ell} = O_F((1-\eta)^J)  
\end{equation}
recalling that $E_\ell \ll_F 1$. It follows from \eqref{eq: Y_ell} and \eqref{eq: Y_ell part 2} that $\sum \limits_{\ell \mid q} \left \lvert\sum \limits_{k \in Y_\ell} H(\ell,k,J) \right \rvert = o(1)$ as $J \rightarrow \infty$.
\end{proof}
\begin{lemma}\label{Z_ell}
As $x \rightarrow \infty$,
    $$\sum \limits_{\ell \mid q} \left \lvert\sum \limits_{k \in Z_\ell} H(\ell,k,J) \right \rvert = o(1).$$
\end{lemma}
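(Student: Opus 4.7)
The plan is to exploit the fact that $Z_\ell$ is negligibly small — both in the number of relevant primes $\ell$ and, for each such prime, in the number of admissible exponents $k$ — and that on $Z_\ell$ the normalized inner character sum is uniformly bounded away from $1$. In contrast to the $Y_\ell$ case treated in Lemma \ref{Y_ell}, the bound from Corollary \ref{Corollary} is useless on $Z_\ell$, since $B(\ell,k) \ge 1$ there by definition; the gain must instead come from the uniform absolute-value bound $\le 1-\eta$ set up in the displayed inequality immediately preceding the lemma.

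First I would observe that by the last bullet preceding the lemma, $Z_\ell = \emptyset$ for every $\ell > \ell_0 = \ell_0(F)$, so the outer sum collapses to those $\ell \mid q$ with $\ell \le \ell_0$, of which there are $O_F(1)$. For each such $\ell$, since $X_\ell = \{k:1 \le k \le D_\ell\}$ and $Y_\ell = \{k : E_\ell \le k \le e\}$, we get $Z_\ell \subseteq \{k : D_\ell < k < E_\ell\}$, a set of cardinality $\ll_F 1$ because $E_\ell \ll_F 1$. For any such $k$ and any $r$ with $0 < r < \ell^k$ and $\ell \nmid r$, the inequality preceding the lemma gives
$$\left\lvert\sum \limits_{\overset{v \mod \ell^k}{(v,\ell)=1}} \frac{\exp\left(\frac{2\pi i r F(v)}{\ell^k}\right)}{\varphi(\ell^k)}\right\rvert \le 1-\eta,$$
with $\eta \gg_F 1$. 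Plugging this into the definition of $H(\ell,k,J)$ and applying the triangle inequality yields
$$|H(\ell,k,J)| \le (\ell^k - \ell^{k-1})(1-\eta)^J \le \ell^{E_\ell}(1-\eta)^J = O_F((1-\eta)^J),$$
since $\ell \le \ell_0$ and $E_\ell \ll_F 1$. Summing over the $O_F(1)$ choices of $k \in Z_\ell$ and the $O_F(1)$ primes $\ell \le \ell_0$ dividing $q$ then gives
$$\sum \limits_{\ell \mid q} \left\lvert\sum \limits_{k \in Z_\ell} H(\ell,k,J)\right\rvert = O_F((1-\eta)^J) = o(1)$$
as $J \to \infty$, which is the same as $x \to \infty$ by the choice $J = \lfloor \log\log\log x \rfloor$.

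There is no serious obstacle here: essentially all the real work was absorbed into the prior construction of $\ell_0$, $\eta$, and the bound $E_\ell \ll_F 1$. The only point that deserves a careful look is that the $(1-\eta)$ bound must be uniform across all admissible $r$, not merely pointwise for a single distinguished $r$ — one should verify that $\eta$ was chosen with this uniformity in mind so that the triangle inequality step above carries no hidden dependence on $r$. Given that, the argument reduces to the geometric decay $(1-\eta)^J$ overwhelming the finite $O_F(1)$ count of $(\ell, k)$ pairs in play.
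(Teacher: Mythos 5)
Your proof is correct and follows essentially the same route as the paper's: restrict to the $O_F(1)$ primes $\ell \le \ell_0$ (since $Z_\ell = \emptyset$ otherwise), note $Z_\ell \subseteq \{k : D_\ell < k < E_\ell\}$ has $O_F(1)$ elements, and use the uniform bound $1-\eta$ on the normalized inner sum to obtain $O_F((1-\eta)^J) = o(1)$. The $r$-uniformity of $\eta$ you flagged is indeed needed and holds for the same finiteness reason that makes the rest of the argument work: with $\ell \le \ell_0$, $k < E_\ell \ll_F 1$, and $0 < r < \ell^k$, $\ell \nmid r$, there are only $O_F(1)$ triples in play, each giving a value strictly below $1$ since $k \notin X_\ell$ — a point the paper relies on implicitly as well.
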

\begin{proof}
Since $Z_\ell$ is empty for $\ell > \ell_0$, we may restrict attention to $\ell \le \ell_0$. By definition of $D_\ell$ and $E_\ell$, we can write $Z_\ell = \{ k \in \mathbb{N}: D_\ell < k < E_\ell\}$.  Thus, recalling that $E_\ell \ll_F 1$,
$$\left \lvert \sum \limits_{k\in Z_\ell}H(\ell,k,J) \right \rvert \le (E_\ell-D_\ell) \max \limits_{k \in Z_\ell} \left\{\left \lvert\sum \limits_{\overset{v \mod \ell^{k}}{(v,\ell) = 1}}\frac{\exp\left(\frac{2 \pi i r f(v)}{\ell^{k}} \right)}{\varphi(\ell^{k})}\right \rvert^J \varphi(\ell^{k}) \right\}$$
$$< (\ell^{E_\ell} \cdot E_\ell) \cdot\max \limits_{D_\ell< k < E_\ell} \left\{\left \lvert\sum \limits_{\overset{v \mod \ell^{k}}{(v,\ell) = 1}}\frac{\exp\left(\frac{2 \pi i r f(v)}{\ell^{k}} \right)}{\varphi(\ell^{k})}\right \rvert\right\}^J.$$
$$= O\left(\ell^{E_\ell} \cdot E_\ell \cdot (1-\eta)^J\right) = O_F((1-\eta)^J).$$ 
Hence, $\sum \limits_{\ell \mid q} \left \lvert\sum \limits_{k \in Z_\ell} H(\ell,k,J) \right \rvert = o(1)$ as $J \rightarrow \infty$.
\end{proof}
\noindent
We can write,
$$\#V_{\ell^e} = \frac{\varphi(\ell^e)^J}{\ell^e}\Big[1 +\sum \limits_{k \in X_\ell} H(\ell,k,J)+\sum \limits_{k \in Y_\ell} H(\ell,k,J)+\sum \limits_{k \in Z_\ell} H(\ell,k,J)\Big].$$
Multiplying over all $\ell$ dividing $q$,
$$\# \mathcal{V}(w) = \frac{\varphi(q)^J}{q} \prod \limits_{\ell \mid q}\left[ 1 +\sum \limits_{k \in X_\ell} H(\ell,k,J)+\sum \limits_{k \in Y_\ell} H(\ell,k,J)+\sum \limits_{k \in Z_\ell} H(\ell,k,J)\right].$$
To proceed, we observe that $\#\mathcal{V}(w) = 0$ unless $F(1) J \equiv w \Mod{\ell^{D_\ell}}$ for all $\ell$ dividing $M$. Indeed, since $\ell^{D_\ell}$ divides $M$, we have $F(v) \equiv F(1) \Mod{\ell^{D_\ell}}$ for all $v$ coprime to $\ell$. So if $(v_1,\hdots,v_J)$ is any element of $V_{\ell^e}$, then (keeping in mind that $D_\ell \le e$) it must be that
  $F(1)J \equiv \sum \limits_{j=1}^{J} F(v_j) \equiv w \Mod{\ell^{D_{\ell}}}$.  Thus, using Lemma \ref{X_ell},
$$\#\mathcal{V}(w)= \mathbbm{1}_{F(1)J \equiv w \Mod{M}}\frac{\varphi(q)^J}{q}\prod \limits_{\overset{\ell \mid q}{\ell \mid M}}\left[\ell^{D_\ell} + \sum \limits_{k \in Y_\ell \cup Z_\ell}H(\ell,k,J)  \right] \prod \limits_{\overset{\ell \mid q}{\ell \nmid M}}\left[1 + \sum \limits_{k \in Y_\ell \cup Z_\ell}H(\ell,k,J)   \right]$$
$$= \mathbbm{1}_{F(1)J \equiv w \Mod{M}}\frac{M\varphi(q)^J}{q}\prod \limits_{\overset{\ell \mid q}{\ell \mid M}}\left[1 + \frac{1}{\ell^{D_\ell}}\sum \limits_{k \in Y_\ell \cup Z_\ell}H(\ell,k,J)  \right] \prod \limits_{\overset{\ell \mid q}{\ell \nmid M}}\left[1 + \sum \limits_{k \in Y_\ell \cup Z_\ell}H(\ell,k,J)   \right].$$
By Lemmas \ref{Y_ell} and \ref{Z_ell}, $\sum \limits_{\ell \mid q}\left \lvert\sum \limits_{k \in Y_\ell \cup Z_\ell}H(\ell,k,J) \right \rvert = o(1)$ as $x \rightarrow \infty$. Therefore,
$$\prod \limits_{\overset{\ell \mid q}{\ell \mid M}}\left[1 + \frac{1}{\ell^{D_\ell}}\sum \limits_{k \in Y_\ell \cup Z_\ell}H(\ell,k,J)  \right]\prod \limits_{\overset{\ell \mid q}{\ell \nmid M}}\left[1 + \sum \limits_{k \in Y_\ell \cup Z_\ell}H(\ell,k,J)   \right] = 1+o(1),$$ 
and
$$\#\mathcal{V}(w) = (1+o(1))\mathbbm{1}_{F(1)J \equiv w \Mod{M}}\frac{M\varphi(q)^J}{q}.$$
Hence, using this asymptotic in \eqref{eq: 3},
$$\sum \limits_{\overset{\text{convenient $n \le x$}}{f(n) \equiv a \Mod{q}}}1 = \sum \limits_{m \le x} \frac{\#\mathcal{V}_m}{\varphi(q)^J} \left( \frac{1}{J!}\sum \limits_{\overset{P_1,\hdots,P_J \text{ distinct}}{\overset{P_1 \cdots P_J \le x/m}{\overset{\text{each } P_J > L_m}{F(1)J \equiv w\mod{M}}}}}1 \right) + O\left(x \exp\left( -\frac{1}{8}C_K (\log x)^{\epsilon/4}\right)\right) $$
$$=(1+o(1)) \frac{M}{q}\left( \frac{1}{J!} \sum \limits_{m \le x}  \sum \limits_{\overset{P_1,\hdots,P_J \text{ distinct}}{\overset{P_1 \cdots P_J \le x/m}{\overset{\text{each } P_J > L_m}{f(P_1\cdots P_J) \equiv a-f(m)\mod{M}}}}}1 \right) + O\left(x \exp\left( -\frac{1}{8}C_K (\log x)^{\epsilon/4}\right)\right).$$
Here we used the fact that $F(1)J \equiv w \mod{M}$ can be rewritten as $f(P_1\cdots P_J)\equiv a - f(m) \Mod{M}$ because $F(v) \equiv F(1) \Mod{M}$ for all $v$ coprime to $M$. Since $M \ll_F 1$, $P_j$ is coprime to $M$ for all $1 \le j \le J$ (for $y$ large). By \eqref{eq: convenient}, using the fact that $M \ll_F 1$,
\begin{equation}\label{eq: almost}
    \sum \limits_{\overset{\text{convenient $n \le x$}}{f(n) \equiv a \Mod{q}}}1 = \frac{M}{q}\sum \limits_{\overset{ \text{convenient }n \le x}{f(n) \equiv a \Mod{M}}}1 + o\left( \frac{x}{q}\right).
\end{equation}
Since $q \in \mathcal{S}_f$, $M \mid q$ and $M \ll_F 1$, Theorem \ref{Delange} implies 
\begin{equation}\label{eq: final Delange}
    \sum \limits_{\overset{n \le x}{f(n) \equiv a \Mod{M}}} 1 \sim \frac{1}{M}\sum \limits_{n \le x}1.
\end{equation}
Using Lemma \ref{Inconvenient} and the fact that $1\le M \ll_F 1$, 
\begin{align*}
    \sum \limits_{\overset{n \le x}{f(n) \equiv a \Mod{M}}} 1 = \sum \limits_{\overset{ \text{convenient }n \le x}{f(n) \equiv a \Mod{M}}}1 + o\left(\frac{x}{M} \right) & \text{    
    and} & \frac{1}{M}\sum \limits_{n \le x}1 =  \frac{1}{M}\sum \limits_{\text{convenient $n \le x$}}1 + o\left(\frac{x}{M} \right).
\end{align*}
Thus, by \eqref{eq: final Delange}, $\sum \limits_{\overset{ \text{convenient }n \le x}{f(n) \equiv a \Mod{M}}}1 = \frac{1}{M}\sum \limits_{\text{convenient $n \le x$}}1 + o\left(\frac{x}{M} \right)$, which along with \eqref{eq: almost} proves \eqref{eq: 1} and Theorem \ref{Main Theorem} follows.
\bibliographystyle{unsrt}
\bibliography{Dissertation/ReferencesF}
\end{document}